\newcommand{\HE}{Name of Handling Editor}
\newcommand{\DoS}{Month/Day/Year}
\newcommand{\DoA}{Month/Day/Year}
\newcommand{\CA}{Marcel Schweitzer}
\newcommand{\Names}{Bahar Arslan, Samuel D.\ Relton, Marcel Schweitzer}
\newcommand{\Title}{Structured level\=/2 condition numbers of matrix functions}
\newtheorem{remark}[theorem]{Remark}
\def\vec{\textup{vec}}
\def\J{\mathbb{J}}
\def\L{\mathbb{L}}
\def\G{\mathbb{G}}
\def\S{\mathbb{S}}
\def\R{\mathbb{R}}
\def\C{\mathbb{C}}
\def\K{\mathbb{K}}
\def\M{\mathcal{M}}
\def\N{\mathcal{N}}
\def\adj{{\textstyle\star}}
\def\F{Fr\'{e}chet }
\def\nbyn{n \times n}
\def\struc{\textup{struc}}
\def\cond{\mathrm{\mathop{cond}}}
\def\scalar#1{\langle #1\rangle}
\newcolumntype{P}[1]{>{\centering\arraybackslash}p{#1}}
\def\revddots{\mathinner{\mkern1mu\raise\p@
    \vbox{\kern7\p@\hbox{.}}\mkern2mu
    \raise4\p@\hbox{.}\mkern2mu\raise7\p@\hbox{.}\mkern1mu}}
\def\mymatrix#1{\null\,\vcenter{\normalbaselines\m@th
    \ialign{\hfil$##$\hfil&&\quad\hfil$##$\hfil\crcr
      \mathstrut\crcr\noalign{\kern-\baselineskip}
      #1\crcr\mathstrut\crcr\noalign{\kern-\baselineskip}}}\,}
\def\mybmatrix#1{\left[ \mymatrix{#1} \right]}
\def\revddots{\mathinner{\mkern1mu\raise\p@
    \vbox{\kern7\p@\hbox{.}}\mkern2mu
    \raise4\p@\hbox{.}\mkern2mu\raise7\p@\hbox{.}\mkern1mu}}
\active\gdef@{\mkern1mu}}
\newcounter{mylineno}
\let\oldtabcr\@tabcr
\def\mynewline{\refstepcounter{mylineno}%
                \llap{\footnotesize\arabic{mylineno}\hspace{5pt}}%
               }
\gdef\@tabcr{\@stopline \@ifstar{\penalty%
            \@M \@xtabcr}\@xtabcr\mynewline}
\newcounter{exprmt}
\def\Experiment{Experiment}
\newenvironment{exprmt}%
               {\begin{list}{\indent\emph{\Experiment} {\arabic{section}.\arabic{exprmt}.}}%
                {\usecounter{exprmt}
                \setlength{\leftmargin}{\rightmargin}
                \setlength{\labelwidth}{\leftmargin}
                \addtolength{\labelwidth}{-\labelsep}
                \setlength{\topsep}{0in}
                \setlength{\itemsep}{0pt}
                \setlength{\listparindent}{\parindent}%
               }}%
               {\end{list}}
\DeclareMathOperator{\nnz}{nnz}
\begin{document}

\bibliographystyle{plain}

\setcounter{page}{1}

\thispagestyle{empty}

 \title{\Title\thanks{Received
 by the editors on \DoS.
 Accepted for publication on \DoA. 
 Handling Editor: \HE. Corresponding Author: \CA}}

\author{
Bahar Arslan\thanks{Mathematics Department, Faculty of Engineering and Natural Sciences, Bursa Technical
University, Yıldırım/Bursa, Turkey,  \texttt{bahar.arslan@btu.edu.tr}}
\and
Samuel D.\ Relton\thanks{Faculty of Medicine and Health, University of Leeds, Leeds, UK, \texttt{s.d.relton@leeds.ac.uk}}
\and 
Marcel Schweitzer\thanks{School of Mathematics and Natural Sciences, Bergische Universit\"at Wuppertal, 42097 Wuppertal, Germany, \texttt{marcel@uni-wuppertal.de}}}

\pagestyle{myheadings}
\markboth{\Names}{\Title}

\maketitle

\begin{abstract}
Matrix functions play an increasingly important role in many areas of scientific computing and engineering disciplines. In such real-world applications, algorithms working in floating-point arithmetic are used for computing matrix functions and additionally, input data might be unreliable, e.g., due to measurement errors. Therefore, it is crucial to understand the sensitivity of matrix functions to perturbations, which is measured by condition numbers. However, the condition number itself might not be computed exactly as well due to round-off and errors in the input. The sensitivity of the condition number is measured by the so-called \emph{level\=/2 condition number}. For the usual (level-1) condition number, it is well-known that \emph{structured} condition numbers (i.e., where only perturbations are taken into account that preserve the structure of the input matrix) might be much smaller than unstructured ones, which, e.g., suggests that structure-preserving algorithms for matrix functions might yield much more accurate results than general-purpose algorithms. In this work, we examine structured level\=/2 condition numbers in the particular case of restricting the perturbation matrix to an automorphism group, a Lie or Jordan algebra or the space of quasi-triangular matrices. In numerical experiments, we then compare the unstructured level\=/2 condition number with the structured one for some specific matrix functions such as the matrix logarithm, matrix square root, and matrix exponential.
\end{abstract}

\begin{keywords}
level\=/2 condition number, matrix function, automorphism group, Lie algebra, Jordan algebra, quasi-triangular matrices
\end{keywords}

\begin{AMS}
65F35, 15A16, 65F60
\end{AMS}

\section{Introduction}
Matrix functions $f:\K^{\nbyn}\rightarrow \K^{\nbyn}$,
where $\K=\C$ or $\R$,
such as the matrix exponential, logarithm, and square root are used in a wide variety of applications including the solution of differential equations by exponential integrators, network analysis and machine learning~\cite{app7,app4,app5,app6,app2,app3,app1}. The condition number measures the sensitivity of a matrix function to perturbations in the input due to rounding or measurement error. 

The (level-1) absolute condition number is
defined for a matrix function $f:\K^{\nbyn}\rightarrow \K^{\nbyn}$ as
\begin{equation}
\label{eq.condnum}
\cond^{[1]} (f,A) = \lim_{\epsilon\rightarrow 0}\:\:\sup_{\| E\|\leq \epsilon} \dfrac{\| f(A+E)-f(A)\|}{\epsilon}.
\end{equation}
It can be computed via the \F derivative $L^{(1)}_f(A,\ \cdot\ )$ of the matrix function, a linear function in the perturbation matrix $E$ such that $L^{(1)}_f(A, E)$ satisfies~\cite[Section~3]{high:FM}
\begin{equation*}
\|f(A+E)-f(A)-L^{(1)}_f(A,E)\|=o(\| E\|),
\end{equation*}
as we outline at the beginning of section~\ref{subsec.unstruc_lvl2}. When the input matrix $A$ has some structure described by a Lie or Jordan algebra (e.g.~symmetric or skew-Hermitian), and we consider only perturbations that retain this structure, the resulting \emph{structured level-1 condition number} can be significantly smaller. This has been investigated in~\cite{Tisseur_scond} for linear systems, matrix inversion, and distance to singularity. The effect of structured perturbations of matrix functions in Lie and the Jordan algebras is also studied by Davies~\cite{Davies:2004:SCM}. The structured level-1 condition number using differentials between smooth manifolds is also analysed for automorphism groups in~\cite{bvt2019}.  

Unfortunately, the condition number itself might not be computed precisely: previous work on the condition number of the condition number (the so-called \emph{level\=/2 condition number}) \cite{hire14a} has shown that it can be highly sensitive in some cases. Similar findings are also known for the level\=/2 condition number for solving linear systems~\cite{Demmel_level2,Des_Higham1995193}.

The aim of this work is to investigate the structured level\=/2 condition number of matrix functions by enforcing structure on the perturbation matrices and it is organised as follows. In section~\ref{sec.prev_work} we summarise existing work on the unstructured level\=/2 condition number of matrix functions and the structured level-1 condition number. We then build upon these foundations in section~\ref{sec:new_results} and derive upper bounds for structured condition numbers of matrix functions of different structures. We first investigate the case of matrices coming from Lie and Jordan algebras or automorphism groups and then focus on the important case of quasi-triangular matrices. Additionally, for a few specific choices of function and matrix class, we derive exact formulas for the structured level\=/2 condition number. Numerical experiments in section~\ref{sec.numexp} provide a thorough comparison of the structured and unstructured condition numbers for the  matrix exponential, logarithm, and square root over a variety of structures.
We summarise our findings in section~\ref{sec.conclusions} and give some ideas for future work in this area.

\section{Previous work on matrix function conditioning}
\label{sec.prev_work}

In this section we summarise the current theory on matrix function conditioning, which we build upon to derive corresponding results for the structured level\=/2 condition number in later sections. In particular, we focus here on the structured level-1 condition number and the unstructured level\=/2 condition number. We reproduce some of the derivations in quite a bit of detail, as the concepts and notations introduced there are essential for understanding the derivations of our new results in section~\ref{sec:new_results}.

\subsection{The unstructured level\=/2 condition number case}
\label{subsec.unstruc_lvl2}

In this section we summarise previous results for the unstructured level\=/2 condition number,
largely based upon \cite{high:FM} and \cite{hire14a}. To begin, the level-1 condition number is the norm of the first order \F derivative operator $L^{(1)}_f(A,\ \cdot \ )$, i.e.,
$$
\cond^{[1]}(f,A)= \max_{E\neq 0}\dfrac{\| L^{(1)}_f(A,E)\|}{\| E\|}=\max_{\|E\|=1}\| L^{(1)}_f(A,E)\|,
$$
where $\|\cdot\|$ can in principle be any matrix norm. It is convenient to use the Frobenius norm $\|\cdot\|_F$ because of the fact that $\| A\|_{F}=\| \vec(A)\|_{2}$, where $\vec(\cdot)$ stacks the columns of a matrix into one long vector. This reduces the problem to finding the 2-norm of $K^{(1)}_f(A)$, the \emph{Kronecker form} of the \F derivative: The Kronecker form is the matrix of size $n^2 \times n^2$ that satisfies
$$
\vec(L_f^{(1)}(A,E)) = K^{(1)}_f(A)\vec(E) \text{ for all } E \in \K^{\nbyn}.
$$
Using this definition, the condition number in the Frobenius norm can be rewritten as
\[
\cond^{[1]} (f,A)=\max_{E\neq 0} \dfrac{\Vert L^{(1)}_f(A,E)\|_F}{\| E\|_F}=\max_{E\neq 0} \dfrac{\Vert \vec(L^{(1)}_f(A,E))\|_2}{\| \vec(E)\|_2} =\max_{E\neq 0} \dfrac{\| K^{(1)}_f(A)\vec(E)\|_2}{\| \vec(E)\|_2}=\| K^{(1)}_f(A)\|_2. 
\]

Higher order derivatives of matrix functions are defined similarly, and in particular the second \F derivative $L_f^{(2)}(A,E_1,E_2)$ is a multilinear function which satisfies
\begin{equation}
\label{eq.secondfrechet}
\|L_f^{(1)}(A+E_2,E_1)-L_f^{(1)}(A,E_1)-L_f^{(2)}(A,E_1,E_2)\|=o(\|E_2\|).
\end{equation}
Sufficient conditions for the existence of the second \F derivative are given by Higham and Relton~\cite[Theorem~3.5]{hire14a}. This theorem also states that $L^{(2)}_f(A, E_1, E_2)$ is equal to the upper right $\nbyn$ block of $f(X_2)$,
\begin{equation}\label{eq.higherfrechet}
L^{(2)}_f(A, E_1, E_2) = [f(X_2)]_{1:n,1:n},
\end{equation}
where $X_2$ is built using the following recursion
\begin{align*}
X_0&=A,\\
X_k&=I_2\otimes X_{k-1}+
\begin{bmatrix}
  0 & 1 \\
  0 & 0
\end{bmatrix}\otimes I_{2^{k-1}}\otimes E_k, \quad k \geq 1
\end{align*}
with $I_n$ the $\nbyn$ identity matrix and $\otimes$ the Kronecker product.

Using these building blocks, one can define higher-order condition numbers of matrix functions which capture the sensitivity of the condition number itself to perturbations in $A$. We outline the basic approach here, following largely the derivation in~\cite{hire14a}. We begin by defining the absolute level\=/2 condition number via the following equation,
\begin{equation}
\label{eq.deflevel2}
\cond^{[2]}(f,A)=\lim_{\epsilon\rightarrow 0}\:\:\sup_{\| Z\|\leq \epsilon}\, \dfrac{|\cond^{[1]}(f,A+Z)-\cond^{[1]}(f,A)|}{\epsilon}.
\end{equation}
We can derive an upper bound for the level\=/2 condition number (see~\cite{hire14a}) 
by combining equations \eqref{eq.condnum} and \eqref{eq.secondfrechet}, which yields
$$
\cond^{[1]}(f,A+Z)=\max_{\|E\|=1}\|L^{(1)}_f(A,E)+L^{(2)}_f(A,E,Z)+o(\|Z\|) \|.
$$
From the triangular inequality we get the upper bound
\begin{align*}
|\cond^{[1]}(f,A+Z)-\cond^{[1]}(f,A)| & = \left|\max_{\|E\|=1}\|L^{(1)}_f(A,E)+L^{(2)}_f(A,E,Z)+o(\|Z\|) \|-  \max_{\|E\|=1}\|L^{(1)}_f(A,E)\| \right|  \\
& \leq \max_{\|E\|=1}\| L^{(2)}_f(A,E,Z) +o(\|Z\|)\|.
\end{align*}
Using this within the definition of the level\=/2 condition number (\ref{eq.deflevel2}) yields
\begin{align}
\cond^{[2]}(f,A)&\leq \lim_{\epsilon\rightarrow 0}\, \sup_{\| Z\|\leq \epsilon}\,\max_{\|E\|=1}\|L^{(2)}_f(A,E,Z/\epsilon)+o(\|Z\|)/\epsilon  \|\nonumber\\
& = \sup_{\| Z\|\leq 1}\max_{\|E\|=1} \| L^{(2)}_f(A,E,Z) \| \nonumber\\
& =\max_{\| Z\|=1}\max_{\|E\|=1} \| L^{(2)}_f(A,E,Z) \|,\label{eq:bound_unstruc_lvl2}
\end{align}
where the supremum can be replaced with a maximum in a 
finite dimensional vector space and since $L^{(2)}_f(A,E,Z)$ is linear in $Z$ we can have $\| Z\|=1$. Furthermore, as $L_f^{(2)}(A,E,Z)$ is also linear in $E$, there exists a matrix $K^{(1)}_f(A,Z) \in \C^{n^2\times n^2}$ such that
\[
 \vec (L^{(2)}_f(A,E,Z))=K^{(1)}_f(A,Z)\vec(E).
\]
Using these facts and taking the Frobenius norm in the upper bound~\eqref{eq:bound_unstruc_lvl2} leads to
\begin{align}
\cond^{[2]}(f,A)&\leq \max_{\| Z\|_F=1} \max_{\|E\|_F=1} \| L^{(2)}_f(A,E,Z) \|_F \nonumber\\
&=\max_{\| Z\|_F=1} \max_{\|\vec(E)\|_2=1} \|K^{(1)}_f(A,Z)\vec(E) \|_2 \nonumber\\
& =\max_{\| Z\|_F=1}\|K^{(1)}_f(A,Z)\|_2 \nonumber\\
& \leq \max_{\| Z\|_F=1}\|K^{(1)}_f(A,Z)\|_F \nonumber\\
& = \max_{\|\vec(Z)\|_2=1}\|K^{(2)}_f(A)\vec(Z)\|_2\nonumber\\
& = \|K^{(2)}_f(A)\|_2\label{eq.upper_bound_unstructured},
\end{align}
where $K^{(2)}_f(A)$ is the Kronecker form of the second \F derivative. 

We stress that---in contrast to the level-1 condition number---the spectral norm of the Kronecker form only gives an upper bound for the level\=/2 condition number instead of its precise value. The exact value of the level\=/2 condition number can only be determined in certain special cases. Higham and Relton find formulas for the exact level\=/2 condition number for the matrix inverse~\cite[Section~5.2]{hire14a} and the exponential of normal matrices~\cite[Section~5.1]{hire14a}, while Schweitzer finds an exact formula for certain functions of Hermitian matrices which have a strictly monotonic derivative~\cite[Section~3]{schweitzer2023}.

\subsection{The structured case}
\label{subsec.structured_case}
In this subsection we give a brief recap of the framework within which
structured perturbations can be analysed, and proceed to derive similar results to the above for structured matrices.

Let $\S$ be a smooth square matrix manifold $\S\subseteq\K^{\nbyn}$ for a field $\K$. 
For a smooth manifold $\S$, the element $E\in\K^{\nbyn}$ is a tangent vector of 
$\S$ at $A\in\S$ if there is a smooth curve $\gamma:\K\rightarrow \S$ such that 
$\gamma(0)=A$, $\gamma^\prime(0)=E$. 
The set 
\begin{equation}\label{eq:tangent_space}
T_A\S:=\{E\in\K^{\nbyn} \:|\:
        \mbox{$\exists\ \gamma:\K\rightarrow \S$ smooth with
                $\gamma(0)=A$, $\gamma'(0)=E$}\}
\end{equation}
is called the tangent space of $\S$ at $A$.
Furthermore we define a scalar product, 
from $\K^n\times \K^n$ to $\K$: $(x,y)\mapsto \scalar{x,y}_M$ for any nonsingular matrix $M \in \K^{n\times n}$ by, 
$$
\scalar{x,y}_M =\left\{\begin{array}{ll}
x^TMy, & \text{for real or complex bilinear forms,}\\
x^*My, & \text{for sesquilinear forms.}
\end{array}\right.
$$
Finally, 
for any matrix $A\in\K^{\nbyn}$, 
there exists a unique $A^{\adj} \in\K^{\nbyn}$, called the adjoint of $A$ with respect to
the scalar product $\scalar{.,.}_M$, which is given by
$$
A^{\adj}= \left\{\begin{array}{ll}
M^{-1}A^TM, & \text{for real or complex bilinear forms,}\\
M^{-1}A^*M, & \text{for sesquilinear forms.}
\end{array}\right.
$$        
We are interested in three classes of structured matrices associated with  $\scalar{.,.}_M$: a Jordan algebra $\J$, a Lie algebra $\L$ and an automorphism group $\G$ defined by
$$
\J : = \{ A\in\K^{\nbyn} \:|\:A^\adj=A\},\qquad
\L : = \{ A\in\K^{\nbyn} \:|\:A^\adj=-A\}, \qquad
\G: = \{ A\in\K^{\nbyn} \:|\:A^\adj=A^{-1}\},
$$ 
respectively. Practically important choices of $M$ are given by the three matrices
\begin{equation}\label{eq:M_choices}
\Sigma_{p,q}=
\begin{bmatrix}
I_p &  0 \\
0 & -I_q 
\end{bmatrix},
\quad (\text{where } p+q=n), \qquad 
R= \left[
      \begin{array}{c@{\mskip8mu}c@{\mskip8mu}c}
      & & 1         \\[-2pt]
      & \revddots & \\[-4pt]
      1 & &         \\[-2pt]
      \end{array}\right],
  \quad \textup{and} \qquad  
      J=\begin{bmatrix}
0  & I_{n/2}\\
 -I_{n/2}&  0
\end{bmatrix},
\end{equation}
as $\J$, $\L$ and $\G$ then correspond to some commonly encountered matrix structures; cf.~Table~\ref{tab.structures}. Note that for all choices of $M$ from~\eqref{eq:M_choices}, we trivially have that $M=\mu M^*$ with $\mu\in\{+1,-1\}$.

\begin{table}
\def\arraystretch{1.4}
\centering
\caption{Choices for $M$ leading to well-known structures when 
constructing the Jordan algebra, Lie algebra, or automorphism groups $\J$, $\L$ and $\G$.}
\begin{tabular}{|c| p{4.6cm} | p{4.6cm} |p{4cm}|}
\hline
 \multicolumn{4}{|c|}{\textbf{Real/Complex Bilinear forms}}\\
 \hline\hline
 $\textbf{M}$ &\textbf{Automorphism Group} ($\G$)
 &\textbf{Jordan\quad Algebra} ($\J$)&\textbf{Lie Algebra} ($\L$) \\ \hline
 $I_n$& Real orthogonals & Symmetrics & Skew-symmetrics  \\  \hline
$I_n$&Complex orthogonals &Complex symmetrics & Complex\:skew-symmetrics  \\ \hline
 $\Sigma_{p,q}$& Pseudo-orthogonals& Pseudo-symmetrics & Pseudo\:skew-symmetrics \\ \hline
 $\Sigma_{p,q}$& Complex\:pseudo-orthogonals&Complex\:pseudo-symmetrics &Complex\:pseudo-skew-symmetrics \\ \hline
 $R$& Real perplectics & Persymmetric & Perskew-symmetrics \\ \hline
$J$ & Real symplectics & Skew-Hamiltonians & Hamiltonians \\ \hline
 $J$ &Complex symplectics & $J$-skew-symmetric & $J$-symmetrics \\ \hline
 \end{tabular}
 \label{tab.structures}
\end{table}

With these fundamentals, we are ready to define the structured level-1 condition number using differentials between tangent spaces to smooth square matrix manifolds, closely following~\cite{bvt2019}.

\begin{definition}
Let $f : \S_{\M} \rightarrow \S_{\N}$ be a smooth matrix function between two smooth square matrix manifolds $\S_{\M}\subseteq\K^{\nbyn}$ and $\S_{\N}\subseteq\K^{\nbyn}$. Let $A \in \S_{\M}$ be such that $f(A) \in \S_{\N}$ is
defined. Then the absolute structured level-1 condition number of $f(A)$ is
\begin{equation}\label{eq.defstruc}
\cond^{[1]}_{\struc}(f,A) =\lim_{\epsilon \rightarrow 0} \sup_{\substack{\|Y-A\| \leq \epsilon \\ Y \in \S_{\M}}} \dfrac{\| f(Y) - f(A)  \|}{\epsilon}.
\end{equation}
\end{definition}
The enforced condition $Y \in \S_{\M}$ restricts the choice of the perturbation $Y-A$ to
a smaller set than in the unstructured case, so from the definition of the supremum, we get
$$
        \cond^{[1]}_{\struc}(f,A) \leq \cond^{[1]}(f,A).
$$
The structured condition number can equivalently be expressed in terms of the \emph{differential} of $f$. Precisely, if $f:\S_{\M}\rightarrow \S_{\N}$ is $\K$-differentiable, then the differential of $f$ at the point $A$ is the map
$$
df_A:T_A\S_{\M} \rightarrow T_{f(A)}\S_{\N}, \quad df_{A}(\gamma^\prime(0))=(f\circ \gamma)^\prime(0),
$$ 
where $T_A\S_{\M}$ and $T_{f(A)}\S_{\N}$ are tangent spaces as defined in~\eqref{eq:tangent_space}. Using this definition, it holds
\begin{equation}\label{eq:struclvl1_differential}
\cond^{[1]}_{\struc}(f,A)=\|df_A\|,
\end{equation}
where
$$
        \| df_A\| :=
        \max_{\substack{E \in T_A \S_{\M} \\ E\neq 0}}
        \dfrac{\|df_A(E)\|}{\| E\|}.
$$
The equality~\eqref{eq:struclvl1_differential} is proven in~\cite[Theorem~2.3]{bvt2019}. 

For practically computing the structured level-1 condition number, one constructs a structured equivalent of the Kronecker form: Let the columns of $B_{\M}$ span the tangent space $T_A \S_{\M}$ and let $y\in \K^p$ with $p=\dim_{\K}T_A \S_{\M}$.
Then for $E\in T_A \S_{\M}$ we have $\vec(E)=B_{\M}y$. 
Focusing on the Frobenius norm as before,
the structured level-1 condition number can be expressed as
\[
        \cond^{[1]}_{\struc}(f,A)
        =\max_{\substack{E \in T_A \S_{\M} \\ E\neq 0}} \dfrac{\Vert \vec(L_f(A,E))\rVert_2}{\lVert \vec(E)\rVert_2}
        =\max_{\substack{y\in \K^p \\ y\neq 0}} \dfrac{\lVert K_f(A)B_{\M}y\rVert_2}{\lVert B_{\M}y\rVert_2}
        =\lVert K_f(A)B_{\M}B_{\M}^+\rVert_2,
\]
where $B_{\M}^+$ denotes the Moore--Penrose pseudoinverse of $B_{\M}$.
The construction of the tangent space basis $B_{\M}$ is given in \cite[Section~3.1]{bvt2019}, which shows that
for $\S_{\M}\in\{\J,\L\}$ we can construct the basis of the tangent space of $\J$ and $\L$ as 
\begin{equation}
\label{eq.basejordanlie}
B_{\M}=(I_n\otimes M^{-1})D_{\S_\M}
\end{equation}
with
\[
D_{\S_\M}=\left\{\begin{array}{ll}
        \mybmatrix{\widetilde{D}_{\mu s} & \check{I}} & \mbox{if $ \mu s=1$,}\\
        \ \ \widetilde{D}_{\mu s} & \mbox{if $\mu s=-1$,}
        \end{array}\right.
\]
where $s=1$ if $\S=\J$, $s=-1$ if $\S=\L$ and $\widetilde{D}_{\mu s}\in\R^{n^2\times n(n-1)/2}$ is a matrix with the columns
\[
(e_{(i-1)n+j}+\mu s e_{(j-1)n+i})/\sqrt{2}, \quad 1\leq i<j\leq n
\]
and $\check{I}\in\R^{n^2\times n}$ has the column vectors $e_{(i-1)n+i}$, $i=1,\dots,n$. 

For an automorphism group the basis of the tangent space to $\S_{\M} = \G$ at $A$ is given by
\begin{equation}
\label{eq.baseauto}
B_{\M}=(I_n\otimes AM^{-1})D_{\L_\M};
\end{equation}
cf.~\cite[Section~3.2]{bvt2019}.

\section{New results}\label{sec:new_results}
In this section, we derive our main result, Lemma~\ref{lem.struclvl2}, and use it to find upper bounds for structured level\=/2 condition numbers for various different matrix structures in section~\ref{subsec:struc_lvl2} and~\ref{sec.quasitriangular}. In section~\ref{subsec:exact} we discuss two particular cases in which it is possible to exactly compute the structured level\=/2 condition number. In section~\ref{subsec.lower_bounds}, we also comment on how to numerically compute \emph{lower} bounds for level\=/2 condition numbers, which are required for judging the quality of the upper bounds we obtained. 

\subsection{The level\=/2 condition number of matrices from Lie and Jordan algebras or automorphism groups}\label{subsec:struc_lvl2}

For a smooth matrix function $f : \S_{\M} \rightarrow \S_{\N}$ we can define the structured level\=/2 condition number similarly to~\eqref{eq.deflevel2}, but restricting the perturbation matrix $Z$ to $\S_{\M}$,
\begin{equation}\label{eq.defstruclevel2}
\cond_{\struc}^{[2]}(f,A)=\lim_{\epsilon\rightarrow 0}\:\:\sup_{\substack{A+Z\in \S_{\M}\\ \| Z\|\leq \epsilon}}\, \dfrac{|\cond_{\struc}^{[1]}(f,A+Z)-\cond_{\struc}^{[1]}(f,A)|}{\epsilon}.
\end{equation}
The following lemma provides an upper bound of the structured level\=/2 condition number of $f(A)$. 
\begin{lemma}\label{lem.struclvl2}
Let $f : \S_{\M} \rightarrow \S_{\N}$ be a smooth matrix function between two smooth square matrix manifolds $\S_{\M}\subseteq\K^{\nbyn}$ and $\S_{\N}\subseteq\K^{\nbyn}$. Let $A \in \S_{\M}$ be such that $f(A) \in \S_{\N}$ is
defined. Then an upper bound for the structured level\=/2 condition number of $f(A)$ is given by
\begin{equation}\label{eq.upper_bound_structured}
\cond_{\struc}^{[2]}(f,A) \leq \|(B_{\M}B_{\M}^+\otimes I_{n^2}) K^{(2)}_f(A)B_{\M}B^+_{\M}\|_2 
\end{equation}
where the columns of $B_{\M}$ span $T_A \S_{\M}$, the tangent space of $\S_{\M}$ at $A$.
\end{lemma}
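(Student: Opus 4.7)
The plan is to mimic the derivation of the unstructured upper bound~\eqref{eq.upper_bound_unstructured} from section~\ref{subsec.unstruc_lvl2}, but to restrict every perturbation to the tangent space $T_A\S_{\M}$ and to work with its basis $B_{\M}$ instead of with the full space $\K^{\nbyn}$.

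First I would expand $\cond_{\struc}^{[1]}(f,A+Z)$ through the defining property~\eqref{eq.secondfrechet} of $L_f^{(2)}$ and then apply the triangle inequality exactly as in the derivation of~\eqref{eq:bound_unstruc_lvl2}. For a Lie or Jordan algebra, $\S_{\M}$ is itself a linear subspace with $T_{A+Z}\S_{\M}=T_A\S_{\M}$, so the inner maximum defining $\cond_{\struc}^{[1]}(f,A+Z)$ runs over the same set $T_A\S_{\M}$ as the one defining $\cond_{\struc}^{[1]}(f,A)$, and the cancellation step goes through verbatim. For an automorphism group the constraint $A+Z\in\G$ from~\eqref{eq.defstruclevel2} only forces $Z$ to lie in $T_A\G$ up to a correction of size $o(\|Z\|)$, which vanishes in the limit $\epsilon\to 0$, so after a short perturbation argument the same conclusion
\[
\cond_{\struc}^{[2]}(f,A) \;\leq\; \max_{\substack{Z\in T_A\S_{\M}\\ \|Z\|_F=1}}\; \max_{\substack{E\in T_A\S_{\M}\\ \|E\|_F=1}} \|L_f^{(2)}(A,E,Z)\|_F
\]
is reached.

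Next I would push this estimate through the Kronecker formalism, following the same steps as at the end of section~\ref{subsec.structured_case}. Parametrising $\vec(E)=B_{\M} y$ and using $\vec(L_f^{(2)}(A,E,Z))=K_f^{(1)}(A,Z)\vec(E)$, the inner maximum equals $\|K_f^{(1)}(A,Z)B_{\M} B_{\M}^+\|_2$. Bounding the spectral norm by the Frobenius norm, applying the identity $\vec(XY)=(Y^T\otimes I)\vec(X)$ together with the symmetry of the orthogonal projector $B_{\M} B_{\M}^+$ (which holds since the bases from~\eqref{eq.basejordanlie}--\eqref{eq.baseauto} are real), and invoking the defining relation $\vec(K_f^{(1)}(A,Z))=K_f^{(2)}(A)\vec(Z)$, the inner maximum rewrites as $\|(B_{\M} B_{\M}^+\otimes I_{n^2})K_f^{(2)}(A)\vec(Z)\|_2$. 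Writing $\vec(Z)=B_{\M} w$ and taking the outer maximum over $w\neq 0$ via the standard identity $\max_{w\neq 0}\|XB_{\M} w\|_2/\|B_{\M} w\|_2=\|XB_{\M} B_{\M}^+\|_2$ then produces the bound~\eqref{eq.upper_bound_structured}.

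The main obstacle is the automorphism-group case, where $\G$ is a nonlinear smooth manifold whose tangent space varies with the base point, so that $A+Z\in\G$ is strictly weaker than $Z\in T_A\G$. The proof therefore requires a preliminary step in which any admissible perturbation $Z$ is shown to agree with an element of $T_A\G$ up to a term of size $o(\|Z\|)$, whose contribution is absorbed in the limit defining $\cond_{\struc}^{[2]}$; this can be done via the parametrisation $T_A\G = A\cdot T_I\G = A\cdot\L$, which makes $T_{A+Z}\G$ depend smoothly on $Z$. Once this issue is settled, the remainder is a mechanical combination of the unstructured argument of section~\ref{subsec.unstruc_lvl2} with the tangent-space machinery of section~\ref{subsec.structured_case}.
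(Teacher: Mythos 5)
Your proposal is correct and follows essentially the same route as the paper's proof: expansion of $\cond_{\struc}^{[1]}(f,A+Z)$ via the second Fr\'echet derivative, the triangle inequality, and then the Kronecker-form machinery together with the identity $B_{\M}=B_{\M}B_{\M}^+B_{\M}$ to arrive at \eqref{eq.upper_bound_structured}. If anything, you are more careful than the paper, which silently replaces the constraint $A+Z\in\S_{\M}$ from \eqref{eq.defstruclevel2} by $Z\in T_A\S_{\M}$ without the $o(\|Z\|)$ perturbation argument you supply for the automorphism-group case.
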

\begin{proof}
Using the first and the second \F derivatives gives
$$
\cond_{\struc}^{[1]}(f,A+Z)=\max_{\substack{E\in \S_{\M}, Z\in \S_{\M},\\ \|E\|=1}}\|L^{(1)}_f(A,E)+L^{(2)}_f(A,E,Z)+o(\|Z\|) \|.
$$
Enforcing the structure on the perturbation matrices $E$ and $Z$ with $E,Z\in\S_{\M}$ yields the following upper bound of the structured level\=/2 condition number:
\begin{align}
\cond_{\struc}^{[2]}(f,A)&\leq \max_{\substack{Z\in T_A \S_{\M}\\ \|Z\|_F=1}}\max_{\substack{E\in T_A \S_{\M}\\ \|E\|_F=1}} \| L^{(2)}_f(A,E,Z) \|_F\nonumber\\
& = \max_{\substack{Z\in T_A\S_{\M}\\ \|Z\|_F=1}}\max_{\substack{E\in T_A\S_{\M}\\\|\vec(E)\|_2=1}} \|K^{(1)}_f(A,Z)\vec(E) \|_2 \nonumber\\
& = \max_{\substack{Z\in T_A\S_{\M}\\ \|Z\|_F=1}}\max_{\substack{y_1\in \K^p\\ y_1\neq 0}} \|K^{(1)}_f(A,Z)B_{\M}y_1 \|_2 \nonumber\\
& = \max_{\substack{Z\in T_A\S_{\M}\\ \|Z\|_F=1}}\|K^{(1)}_f(A,Z)B_{\M}B_{\M}^+ \|_2,\label{eq:proof_part1}
\end{align}
where, for the last equality, we used the fact that $B_{\M}=B_{\M}B_{\M}^+B_{\M}$ together with the definition of the 2-norm,
$$
\|K^{(1)}_f(A,Z)B_{\M}B_{\M}^+ \|_2=\max_{\substack{y_1\in \K^p\\ y_1\neq 0}}\dfrac{\|K^{(1)}_f(A,Z)B_{\M}B_{\M}^+B_{\M}y_1 \|_2}{\|B_{\M}y_1\|_2}.
$$
Bounding the 2-norm by the Frobenius norm, we further obtain, starting from~\eqref{eq:proof_part1},
\begin{align*}
\cond_{\struc}^{[2]}(f,A)&\leq \max_{\substack{Z\in T_A\S_{\M}\\ \|Z\|_F=1}}\|K^{(1)}_f(A,Z)B_{\M}B_{\M}^+ \|_F \\
& =\max_{\substack{Z\in T_A\S_{\M}\\\|\vec(Z)\|_2=1}} \|(B_{\M}B_{\M}^+\otimes I_{n^2}) K^{(2)}_f(A)\vec(Z)\|_2 \\
&=\max_{\substack{y_2\in \K^p\\ y_2\neq 0}} \|(B_{\M}B_{\M}^+\otimes I_{n^2}) K^{(2)}_f(A)B_{\M}y_2\|_2  \\
& = \|(B_{\M}B_{\M}^+\otimes I_{n^2}) K^{(2)}_f(A)B_{\M}B^+_{\M}\|_2,
\end{align*}
again using $B_{\M}=B_{\M}B_{\M}^+B_{\M}$.
\end{proof}

We now modify \cite[Algorithm~4.2]{hire14a} to obtain an upper bound for the structured level\=/2 condition number. It requires any algorithm to compute $L_f(A,E_1,E_2)$ for $f:\S_{\M} \rightarrow \S_{\N}$, where $\S_{\M} \subseteq \K^{\nbyn}$ is a smooth matrix manifold and $E_i\in T_A\S_{\M}$, i.e., $\vec(E_i)=B_{\M}y_i$ for some $y_i\in \K^p$, and returns $\gamma=\|(B_{\M}B_{\M}^+\otimes I_{n^2}) K^{(2)}_f(A)B_{\M}B^+_{\M}\|_2$. The resulting method is depicted in Algorithm~\ref{algor1}.

\begin{algorithm}
\caption{Upper bound for structured level\=/2 condition number}\label{algor1}
\begin{algorithmic}[1]
\setstretch{1.2}

\Statex \textbf{Input:} Matrix $A\in\R^{n\times n}$, function $f$, basis $B_{\M}$ of $T_A \S_{\M}$
\Statex \textbf{Output:} Upper bound $ \gamma=\|(B_{\M}B_{\M}^+\otimes I_{n^2}) K^{(2)}_f(A)B_{\M}B^+_{\M}\|_2$ for structured level\=/2 condition number

\smallskip
\State $\Psi \leftarrow 0_{n^2\times p}$ 
\State $\Omega \leftarrow 0_{n^4\times p}$ 
\For{$i=1:p$}
\State Choose $E_1\in \K^{\nbyn}$ such that $\vec(E_1)=B_{\M}e_i$ 
\For{$j=1:p$}
\State Choose $E_2\in \K^{\nbyn}$ such that $\vec(E_2)=B_{\M}e_j$ 
\State Compute $L=L^{(2)}_f(A,E_1,E_2)$ using the relation (\ref{eq.higherfrechet})
\State $\Psi e_j \leftarrow \vec(L)$  
\EndFor
\State $\Omega e_i \leftarrow \vec(\Psi)$ 
\EndFor
\State $\gamma=\|\Omega\|_2$

\end{algorithmic}
\end{algorithm}

We brief\/ly comment on the computational cost of Algorithm~\ref{algor1}: For typical practically relevant matrix functions, algorithms are available for evaluating them at a cost of $\mathcal{O}(n^3)$. These also directly transfer (via the relation~\eqref{eq.higherfrechet}) to algorithms for computing~$L_f(A,E_1,E_2)$ at cost $\mathcal{O}(n^3)$, albeit with an additional constant $4^3 = 64$ hidden in the $\mathcal{O}$, as the matrix $X_2$ in~\eqref{eq.higherfrechet} is of size $4n \times 4n$. Thus, the overall cost of the algorithm is $\mathcal{O}(p^2n^3)$, where $p$ is the dimension of the tangent space $T_A \S_{\M}$ and the constant can be expected to be quite large. This makes the algorithm prohibitively expensive even for medium scale problems (a typical limitation that is also already present when computing unstructured level\=/2 or even level-1 condition numbers). 

The computational cost of individual \F derivative evaluations can be reduced somewhat by using complex-step approximations~\cite{al2021complex} or quadrature-based algorithms~\cite{schweitzer2023}. While in general, when $A$ and the $E_i$ are unstructured, this does not change the asymptotic scaling of Algorithm~\ref{algor1}, experiments in~\cite{schweitzer2023} indicate that run time can be reduced by about one order of magnitude. If the direction terms $E_i$ are rank-one matrices (which, e.g., happens if the columns of $B_{\M}$ are certain canonical unit vectors), then run time of Algorithm~\ref{algor1} can potentially be reduced to $\mathcal{O}(p^2n^2)$ if linear systems with $A$ can be solved efficiently; see~\cite[Section~4.3]{schweitzer2023} for details.

\subsection{The level\=/2 condition number of (quasi-)triangular matrices}\label{sec.quasitriangular}
Another type of matrix structure that differs from those considered in section~\ref{subsec.structured_case} and~\ref{subsec:struc_lvl2} is (quasi-)triangularity, which plays an important role in many algorithms for computing matrix functions. 
An upper quasi-triangular matrix is a block upper triangular matrix with diagonal blocks of size at most $2 \times 2$.
These commonly occur when using the Schur decomposition: For general $A$, a \emph{Schur decomposition} $A = QUQ^*$ with $Q$ unitary and $U$ upper triangular is always guaranteed to exist. Note, however, that $U$ and $Q$ may be complex-valued even if $A$ is real. In the case of real-valued $A$, one can instead use a \emph{real Schur decomposition} $A = QUQ^T$, where the matrices $U$ and $Q$ are both real-valued, but $U$ is now upper quasi-triangular.

Given a (real) Schur decomposition, the relation $f(A) = Qf(U)Q^*$ shows that evaluating $f(A)$ can essentially be reduced to the evaluating $f$ at a (quasi-)triangular matrix. This forms the basis of many numerical methods for computing $f(A)$ at small and dense matrices $A$: the Schur--Parlett algorithm~\cite{dahi03,hiliu21}, for example. 

Structured level-1 condition numbers of $f(U)$ have recently been investigated in~\cite{almohy22}, 
and we extend this concept to the level\=/2 case. 
Quasi-triangular matrices do not fit into the framework of 
Lie and Jordan algebras or automorphism groups induced by a scalar product, 
and have an arguably simpler structure. We recall some important properties:
\begin{enumerate}
\item The set $\S$ of quasi-triangular matrices (with a fixed structure of diagonal blocks) forms a subalgebra of $\K^{n \times n}$ and is thus in particular a linear space and a smooth matrix manifold.
\item As $\S$ is a linear space, we have $T_U\S = \S$ for any $U \in \S$.
\item As any matrix function $f(U)$ is a polynomial in $U$ and $\S$ is a subalgebra of $\K^{n \times n}$, we have $f: \S \rightarrow \S$, i.e., a function of a quasi-triangular matrix is a quasi-triangular matrix with the same diagonal block structure.
\end{enumerate}

In light of the above observations, Lemma~\ref{lem.struclvl2} also applies to the case of quasi-triangular matrices, and for $B_\M$ we can take a basis of the space of upper quasi-triangular matrices (with the induced block structure). Such a basis is easily constructed from canonical unit vectors: Using the notation introduced in~\cite{almohy22}, let $d \in \R^{n-1}$ denote a vector that encodes the diagonal block structure of $U$, i.e.,
\[
d_i = \begin{cases}
\ 1, & \text{if } u_{i+1,i} \neq 0 \\
\ 0, & \text{otherwise},
\end{cases}
\qquad \text{ for } i = 1,\dots,n-1.
\]
To clearly indicate that the block structure is fixed, we denote the algebra of all quasi-triangular matrices with block structure encoded by $d$ as $\S_d$. Clearly, $\dim(\S_d) = \frac{n(n+1)}{2} + \nnz(d)$, where $\nnz$ denotes the number of nonzero entries. A (vectorised) basis of $\S_d$ is given by
\begin{equation}\label{eq.basis_triangular}
\{e_j \otimes e_i : j = 1,\dots,n, i = 1,\dots,j\} \cup \{ e_j \otimes e_{j+1} : j = 1,\dots,n-1 \text{ with } d_j = 1\}.
\end{equation}
The basis~\eqref{eq.basis_triangular} is orthonormal by construction, so if we take these basis vectors as columns of $B_\M$, we have $B_\M^+ = B_\M^*$.

\subsection{Exact structured level\=/2 condition numbers for special cases}\label{subsec:exact}
So far, we were only able to determine upper bounds for the structured level\=/2 condition number, which is in general all one can hope to obtain. However, for a few particular combinations of matrix class, function $f$ and used norm, it is possible to give an exact, closed form for $\cond_{\struc}^{[2]}(f,A)$.

\paragraph{Exponential of skew-Hermitian matrices}
When using the spectral norm (instead of the Frobenius norm that is typically employed), one can exactly compute the structured level\=/2 condition number when $A$ comes from the Lie Algebra of skew-Hermitian matrices. Note that exponentials of skew-Hermitian matrices, e.g., play an important role in the simulation of quantum systems, where according to the Schrödinger equation, the evolution of states is governed by $\exp(iH)$, where $H$ is the (Hermitian) Hamiltonian of the system; see, e.g.,~\cite{michel2022timeevolver}. As a special case of the very general result that the exponential map takes a Lie algebra into its corresponding Lie group, it is well-known that the exponential of a skew-Hermitian matrix is unitary, a property that is fundamental for proving the following result.

\begin{proposition}\label{prop:exp_skew}
Let $A \in \C^{\nbyn}$ be skew-Hermitian. Then, in the spectral norm,
\[
\cond_{\struc}^{[2]}(\exp,A) = 0.
\]
\end{proposition}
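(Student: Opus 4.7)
My strategy is to prove the stronger statement that $\cond_{\struc}^{[1]}(\exp, A) = 1$ for \emph{every} skew-Hermitian matrix $A$ in the spectral norm. Because $\L$ is a linear space, the set of admissible perturbations in~\eqref{eq.defstruclevel2} consists of arbitrary $Z \in \L$, so constancy of $\cond_{\struc}^{[1]}$ on $\L$ forces the numerator $|\cond_{\struc}^{[1]}(\exp, A+Z) - \cond_{\struc}^{[1]}(\exp, A)|$ to vanish identically, and the claim $\cond_{\struc}^{[2]}(\exp,A) = 0$ follows at once. Arguing exactly as in section~\ref{subsec.unstruc_lvl2} but with perturbations restricted to $\L = T_A\L$, one may rewrite
\[
        \cond_{\struc}^{[1]}(\exp, A) = \max_{\substack{E \in \L \\ \|E\|_2 = 1}} \|L_{\exp}(A, E)\|_2,
\]
so the task reduces to establishing matching upper and lower bounds of $1$ on the right-hand side.

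For the upper bound I would use the integral representation $L_{\exp}(A, E) = \int_0^1 e^{sA} E \, e^{(1-s)A} \, ds$. Since $A$ is skew-Hermitian, $e^{sA}$ is unitary for every real $s$, so left- and right-multiplication by $e^{sA}$ and $e^{(1-s)A}$ preserves the spectral norm of the integrand pointwise in $s$. The triangle inequality for integrals then yields $\|L_{\exp}(A, E)\|_2 \leq \int_0^1 \|E\|_2 \, ds = \|E\|_2 = 1$. For the matching lower bound it suffices to exhibit one direction attaining the value $1$: the matrix $E = iI$ is skew-Hermitian with $\|iI\|_2 = 1$ and commutes with $A$, so the Fréchet derivative collapses to $L_{\exp}(A, iI) = e^A \cdot iI = i\,e^A$, whose spectral norm equals $1$ by unitarity of $e^A$.

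I do not anticipate any real obstacle in this argument; the proof leverages the classical correspondence that sends the Lie algebra $\L$ into the unitary group via $\exp$, which makes both bounds elementary once the norm is fixed as spectral (and, notably, is the reason the result is special to this norm and to this particular Lie algebra). The only small technical point is justifying the passage from the $\sup$ in definition~\eqref{eq.defstruc} to the maximum of the structured Fréchet derivative, which proceeds exactly as in the unstructured derivation recalled in section~\ref{subsec.unstruc_lvl2}, since sums of skew-Hermitian matrices remain skew-Hermitian and the limit may be replaced by a maximum in finite dimensions.
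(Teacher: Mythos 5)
Your proof is correct, and the reduction of the level\=/2 statement to the constancy of $\cond_{\struc}^{[1]}(\exp,\cdot)\equiv 1$ on the Lie algebra is exactly the paper's strategy; where you differ is in how that level\=/1 fact is established. The paper imports Van Loan's formula $\cond^{[1]}(\exp,M)=e^{\alpha(M)}$ for normal $M$ to get $\cond^{[1]}(\exp,A)=1$ for the \emph{unstructured} condition number, exhibits the single skew-Hermitian perturbation $Y_\varepsilon=A+i\varepsilon I$ to show the structured one is at least $1$, and then sandwiches using $\cond_{\struc}^{[1]}\leq\cond^{[1]}$. You instead compute the structured quantity directly: the integral representation $L_{\exp}(A,E)=\int_0^1 e^{sA}Ee^{(1-s)A}\,ds$ together with unitarity of $e^{sA}$ gives the upper bound $\|L_{\exp}(A,E)\|_2\leq\|E\|_2$, and the commuting direction $E=iI$ attains it. Your route is more self-contained (it needs no external result on unstructured conditioning and in fact reproves the unstructured bound as a byproduct, since your upper-bound argument never uses skew-Hermitianity of $E$), at the cost of invoking the integral formula for the Fr\'echet derivative of the exponential; the paper's route is shorter given the literature it already cites. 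Both rely on the same identification $\cond_{\struc}^{[1]}(f,A)=\|df_A\|$ restricted to the tangent space, which, as you note, is valid in any norm and is immediate here because $\L$ is a linear space, so $T_A\L=\L$ and admissible perturbations $Z$ in the level\=/2 definition range over all of $\L$.
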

\begin{proof}
By a result from~\cite[Section~4]{van1977sensitivity} for the unstructured condition number, it holds for any normal matrix $M$ that in the spectral norm $\cond^{[1]}(\exp,M) = e^{\alpha(M)}$, where $\alpha(M)$ denotes the largest real part of any eigenvalue of $M$ (the so-called \emph{spectral abscissa}). Any skew-Hermitian matrix $A$ is normal and has eigenvalues on the imaginary axis, so that $\alpha(A) = 0$, and therefore $\cond^{[1]}(\exp,A) = e^0 = 1$. We now argue that the same holds for the structured level-1 condition number, i.e., $\cond_{\struc}^{[1]}(\exp,A) = 1$. Consider the skew-Hermitian matrix $Y_\varepsilon = A+i\varepsilon I$. We then have $\exp(Y_\varepsilon) = e^{i\varepsilon}\exp(A)$ and $\|Y_\varepsilon-A\| = \varepsilon$. Taking this into account, we find
\begin{equation}\label{eq.Yeps}
\lim\limits_{\varepsilon \rightarrow 0} \frac{\|\exp(Y_{\varepsilon})-\exp(A)\|}{\varepsilon} = \lim\limits_{\varepsilon \rightarrow 0}\frac{\| (e^{i\varepsilon}-1)\exp(A) \|}{\varepsilon} = \lim\limits_{\varepsilon \rightarrow 0}\frac{|e^{i\varepsilon}-1| \cdot \|\exp(A)\|}{\varepsilon} = \lim\limits_{\varepsilon \rightarrow 0}\frac{|e^{i\varepsilon}-1| }{\varepsilon} = 1,
\end{equation}
where we have used in the second to last equality that $\exp(A)$ is unitary and therefore has spectral norm equal to one. From~\eqref{eq.Yeps} it follows that the supremum in~\eqref{eq.defstruc} is at least one. On the other hand, the structured condition number is always bounded above by the unstructured one, so they must agree. Using this, the assertion of the proposition directly follows: From the definition~\eqref{eq.defstruclevel2}, we have
\[
\cond_{\struc}^{[2]}(\exp,A)=\lim_{\epsilon\rightarrow 0}\:\:\sup_{\substack{A+Z\in \S_{\M}\\ \| Z\|\leq \epsilon}}\, \dfrac{|\cond_{\struc}^{[1]}(\exp,A+Z)-\cond_{\struc}^{[1]}(\exp,A)|}{\epsilon}=\lim_{\epsilon\rightarrow 0}\:\:\sup_{\substack{A+Z\in \S_{\M}\\ \| Z\|\leq \epsilon}}\, \dfrac{|1-1|}{\epsilon} = 0,
\]
where we have used that $A+Z$ is again skew-Hermitian.
\end{proof}

\paragraph{Exponential of Hermitian matrices}
Next, we investigate the Hermitian case. As in the skew-Hermitian case, the structured and unstructured level-1 condition numbers agree, and this time, this carries over to the level\=/2 condition numbers.
\begin{proposition}\label{prop:exp_herm}
Let $A \in \C^{\nbyn}$ be Hermitian. Then, in the spectral norm,
\[
\cond_{\struc}^{[2]}(\exp,A) = \cond^{[2]}(\exp,A) = e^{\lambda_{\max}},
\]
where $\lambda_{\max}$ denotes the largest eigenvalue of $A$.
\end{proposition}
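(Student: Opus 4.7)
The plan is to follow the same template as the proof of Proposition~\ref{prop:exp_skew}. We first establish the level-1 identity $\cond^{[1]}_{\struc}(\exp,B) = \cond^{[1]}(\exp,B) = e^{\lambda_{\max}(B)}$ for every Hermitian $B$. From the Van Loan result already quoted in the proof of Proposition~\ref{prop:exp_skew}, the unstructured equality is immediate, since any Hermitian $B$ is normal and has purely real spectrum, so $\alpha(B) = \lambda_{\max}(B)$. To show that the structured level-1 condition number attains the same value, we choose a Hermitian perturbation analogous to the one used there: let $v$ be a unit eigenvector of $B$ associated with $\lambda_{\max}(B)$ and set $Y_{\varepsilon} = B + \varepsilon v v^*$, which lies in the Jordan algebra of Hermitian matrices and satisfies $\|Y_\varepsilon - B\|_2 = \varepsilon$. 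Since $v v^*$ and $B$ commute,
\[
\exp(Y_{\varepsilon}) - \exp(B) = \exp(B)\bigl(\exp(\varepsilon\, v v^*) - I\bigr) = (e^{\varepsilon} - 1)\,e^{\lambda_{\max}(B)}\,v v^*,
\]
whose spectral norm is exactly $(e^{\varepsilon}-1)\,e^{\lambda_{\max}(B)}$. Dividing by $\varepsilon$ and passing to the limit gives the lower bound $\cond^{[1]}_{\struc}(\exp,B) \geq e^{\lambda_{\max}(B)}$, and the matching upper bound is automatic from $\cond^{[1]}_{\struc} \leq \cond^{[1]}$.

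Next, we compute the structured level-2 condition number by applying the level-1 identity to every perturbed matrix $A+Z$ with Hermitian $Z$: this gives $\cond^{[1]}_{\struc}(\exp, A+Z) = e^{\lambda_{\max}(A+Z)}$. Weyl's inequality yields $|\lambda_{\max}(A+Z) - \lambda_{\max}(A)| \leq \|Z\|_2$, and this bound is asymptotically sharp by choosing $Z = \varepsilon v v^*$ with $v$ the top eigenvector of $A$. A first-order Taylor expansion then gives
\[
e^{\lambda_{\max}(A+Z)} - e^{\lambda_{\max}(A)} = e^{\lambda_{\max}(A)}\bigl(\lambda_{\max}(A+Z) - \lambda_{\max}(A)\bigr) + O(\|Z\|_2^2),
\]
so that the supremum in~\eqref{eq.defstruclevel2} divided by $\varepsilon$ converges to $e^{\lambda_{\max}(A)}$, proving $\cond^{[2]}_{\struc}(\exp,A) = e^{\lambda_{\max}(A)}$.

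Finally, we identify this value with the unstructured level-2 condition number. The inequality $\cond^{[2]}_{\struc}(\exp,A) \leq \cond^{[2]}(\exp,A)$ is immediate, since the supremum defining the structured quantity is taken over a subset of the perturbations considered in the unstructured one. For the reverse inequality, we invoke the exact formula of Higham and Relton~\cite[Section~5.1]{hire14a} for the level-2 condition number of the exponential of a normal matrix; its specialization to the Hermitian case reduces to $e^{\lambda_{\max}(A)}$ and thereby closes the chain of equalities. The principal obstacle is precisely this last step: non-Hermitian perturbations take $A+Z$ out of the normal class, so there is no simple closed formula for $\cond^{[1]}(\exp, A+Z)$ and Van Loan's identity only yields a lower bound. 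One therefore relies on the Higham--Relton exact formula, or alternatively on a direct argument showing that the skew-Hermitian component of an arbitrary perturbation contributes no first-order term to $\cond^{[1]}(\exp,\cdot)$ at a Hermitian point.
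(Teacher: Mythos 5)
Your proof is correct and follows essentially the same route as the paper's: establish the level-1 identity $\cond^{[1]}_{\struc}(\exp,B)=\cond^{[1]}(\exp,B)=e^{\lambda_{\max}(B)}$ for Hermitian $B$, use a Weyl-type bound on $\lambda_{\max}(A+Z)$ together with an attaining Hermitian perturbation to evaluate the structured level-2 quantity, and invoke the Higham--Relton formula \cite[Theorem~5.2]{hire14a} for the unstructured value. The only cosmetic difference is your choice of the rank-one perturbation $\varepsilon vv^*$ where the paper uses $\varepsilon I$; both attain the bound and yield the same limit.
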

\begin{proof}
Proceeding as in the proof of Proposition~\ref{prop:exp_skew}, we have that $\cond_{\struc}^{[1]}(\exp,A) = \cond^{[1]}(\exp,A)  = e^{\alpha(A)} = e^{\lambda_{\max}}$. According to~\cite[Theorem~5.2]{hire14a}, it also holds that $\cond^{[2]}(\exp,A) = e^{\lambda_{\max}}$. Clearly, for any Hermitian $Y$ with $\|Y-A\| \leq \varepsilon$ we have $e^{\alpha(Y)} \leq e^{\lambda_{\max}+\varepsilon}$, as the spectral norm of a Hermitian matrix is simply its largest eigenvalue. The upper bound is attained for the Hermitian matrix $Y = A + \varepsilon I$, and it follows from the definition of the structured level\=/2 condition number that
\[
\cond_{\struc}^{[2]}(\exp, A) = \lim\limits_{\varepsilon \rightarrow 0} \frac{|e^{\lambda_{\max}+\varepsilon} - e^{\lambda_{\max}}|}{\varepsilon} = e^{\lambda_{\max}}.
\]
\end{proof}

\begin{remark}
One might argue that the results of Proposition~\ref{prop:exp_skew} and~\ref{prop:exp_herm} are of limited practical relevance. However, they highlight quite interesting properties and difficulties related to level\=/2 (structured) condition numbers: In both cases, the structured and unstructured level-1 condition numbers agree, but the implications for the level\=/2 condition number are fundamentally different. While in the Hermitian case, the structured and unstructured level\=/2 condition number also always agree, they can \emph{never} be equal in the skew-Hermitian  case (the level\=/2 unstructured condition number of the exponential of a skew-Hermitian matrix is always equal to $1$ according to~\cite[Theorem~5.2]{hire14a}). As the structured condition number is zero, this even shows that without additional assumptions, it is impossible to derive any meaningful bounds on the \emph{ratio} between the unstructured and structured condition number,
\[
\frac{\cond^{[2]}(f, A)}{\cond_{\struc}^{[2]}(f, A) } \leq C 
\]
with $C > 1$ for relating the structured and unstructured condition numbers (i.e., in general, the unstructured condition number can be ``infinitely worse'' than the structured one).
\end{remark}

\subsection{Lower bounds for the unstructured level\=/2 condition number}
\label{subsec.lower_bounds}
One fundamental difficulty in gauging the quality of the results obtained in sections~\ref{subsec:struc_lvl2} and~\ref{sec.quasitriangular} is that there is no known algorithm to compute the exact level\=/2 condition number (in either the structured or unstructured case) 
except for very particular special cases where analytic forms are known; see the discussion at the end of section~\ref{subsec.structured_case} for the unstructured case and section~\ref{subsec:exact} above for the structured one. Thus, instead of comparing the structured level\=/2 condition number to the unstructured level\=/2 condition number we can only compare upper bounds for both quantities.

This has a fundamental limitation in that we do not know the sharpness of the bounds: if an arbitrary upper bound for quantity $Q_1$ lies below an arbitrary upper bound for quantity $Q_2$, this does not tell us anything about the relation between $Q_1$ and $Q_2$. Thus, even when~\eqref{eq.upper_bound_structured} lies orders of magnitude below~\eqref{eq.upper_bound_unstructured}, this does not necessarily mean that $\cond^{[1]}(f,A)$ and $\cond_{\struc}^{[1]}(f,A)$ are different. Of course, one can intuitively expect that both bounds~\eqref{eq.upper_bound_unstructured} and~\eqref{eq.upper_bound_structured} are similar in quality as they originate in strongly related concepts,  but this is not guaranteed. 

In order to obtain more robust results, we also compute lower bounds for the level\=/2 condition numbers in our experiments: If an upper bound for the structured condition number lies well below a lower bound for the unstructured condition number, then this is clearly also the case for the exact quantities, which must differ by \emph{at least} the same amount. To obtain lower bounds, observe that in light of~\eqref{eq.deflevel2}, for any particular matrix $Z$ with $\|Z\|_F = 1$ we clearly have
\begin{equation}\label{eq.lower_bound}
\cond^{[2]}(f,A) \geq \lim_{\epsilon\rightarrow 0}\:\: \dfrac{|\cond^{[1]}(f,A+\epsilon Z)-\cond^{[1]}(f,A)|}{\epsilon}.
\end{equation}
Thus, the right-hand side of~\eqref{eq.lower_bound} gives a lower bound of the unstructured condition number for any fixed~$Z$. Clearly, a similar relation also holds for the structured condition number, provided that $Z$ is chosen from the according tangent space.

To obtain lower bounds in our experiments, we use the MATLAB built-in function \texttt{fminsearch}, 
which implements the derivative-free simplex search method from~\cite{lare98} for maximising the right-hand side of~\eqref{eq.lower_bound} by choosing $Z$. 
Within our numerical experiments, 
we replace the limit and use a finite difference quotient instead, 
with $\epsilon = 10^{-3}$. 
Strictly speaking, what we obtain this way is only an \emph{approximate lower bound}, but a comparison to the upper bounds indicates that the result of the optimisation procedure is reliable.

\section{Numerical Experiments}
\label{sec.numexp}

\begin{figure}
\begin{minipage}{.48\linewidth}
\centering
\subfloat[Orthogonal matrices]{\includegraphics[scale=.53]{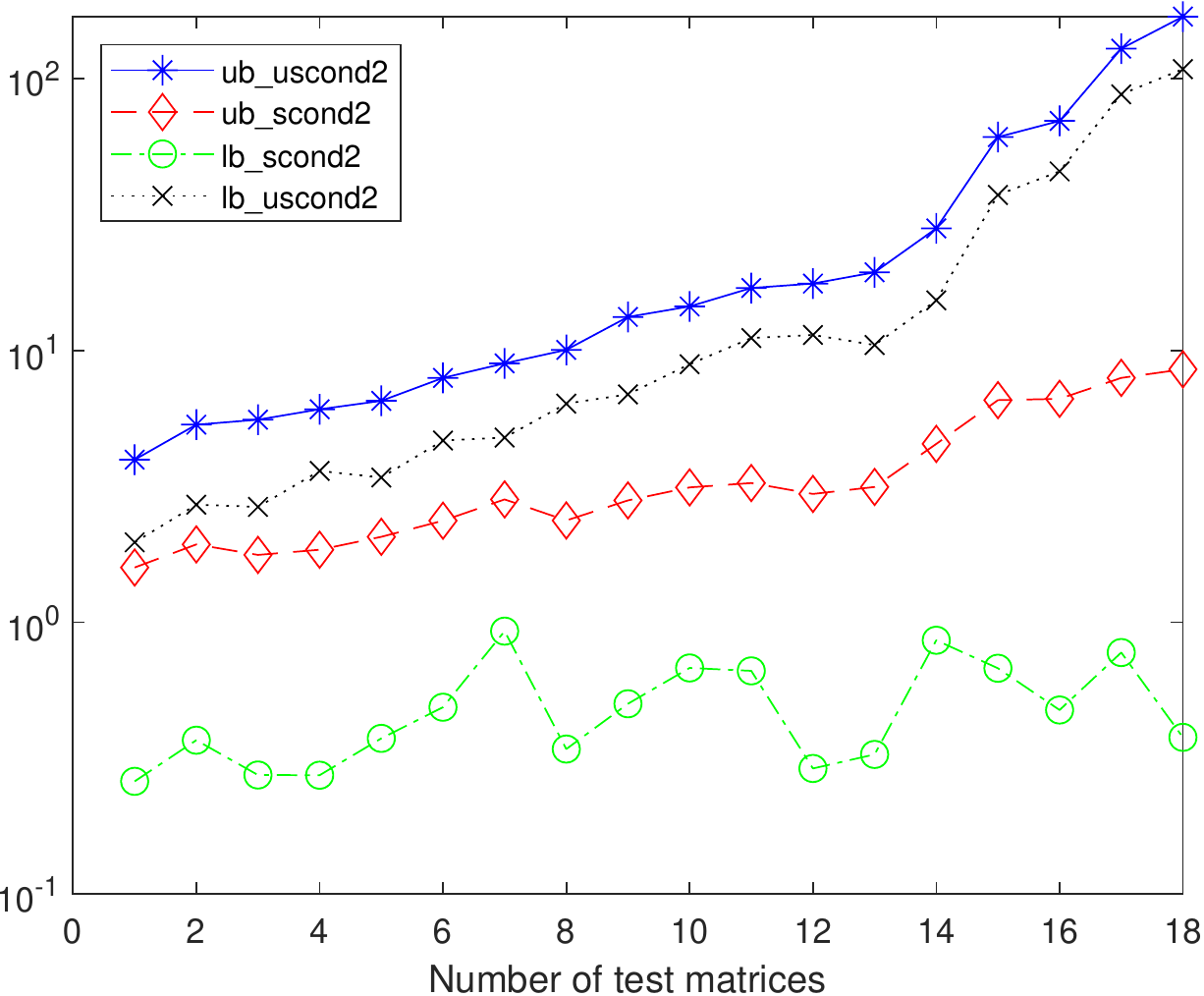}}
\end{minipage}
\begin{minipage}{.48\linewidth}
\centering
\subfloat[Symplectic matrices]{\includegraphics[scale=.53]{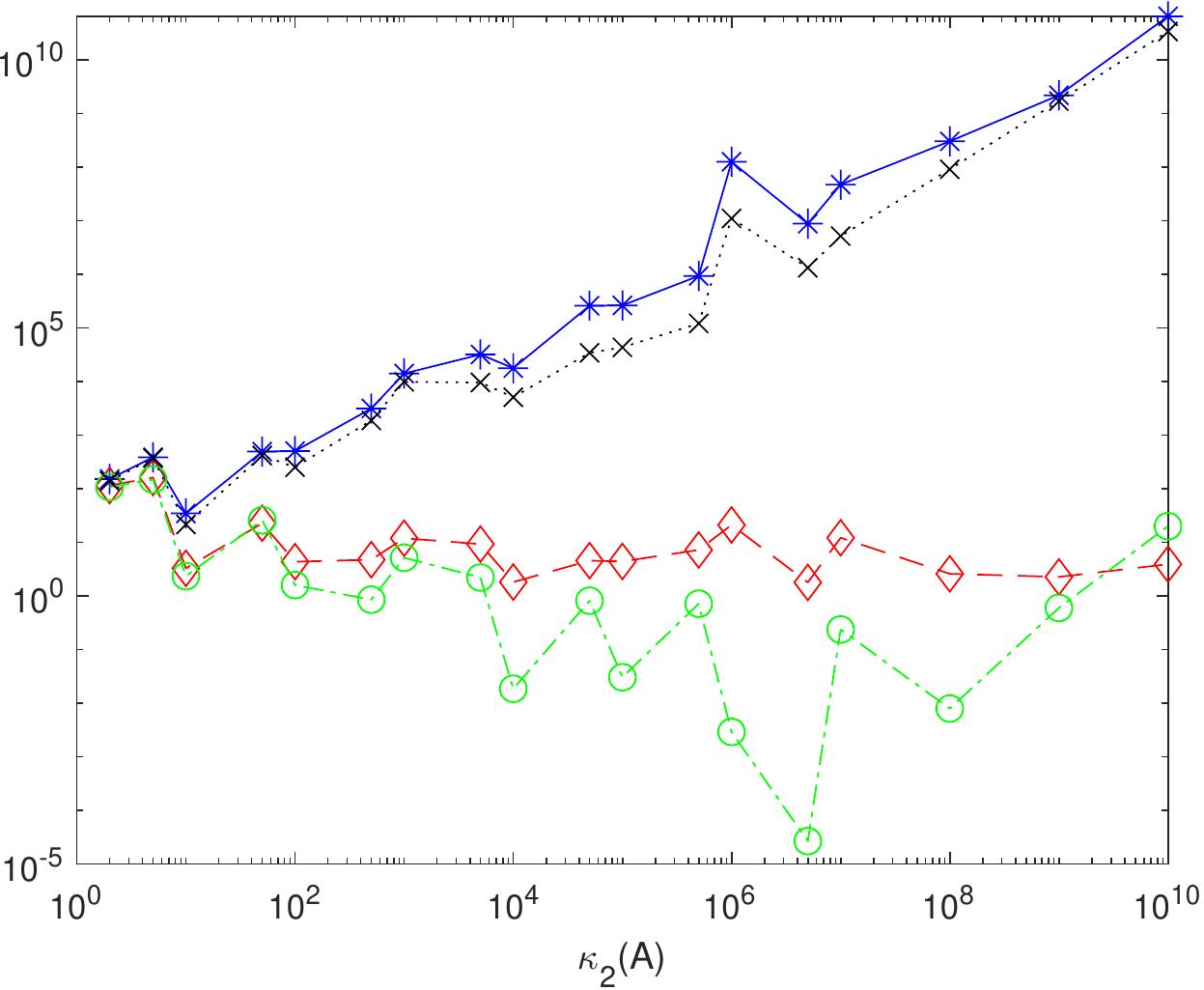}}
\end{minipage}\par\medskip
\centering
\subfloat[Perplectic matrices]{\includegraphics[scale=.53]{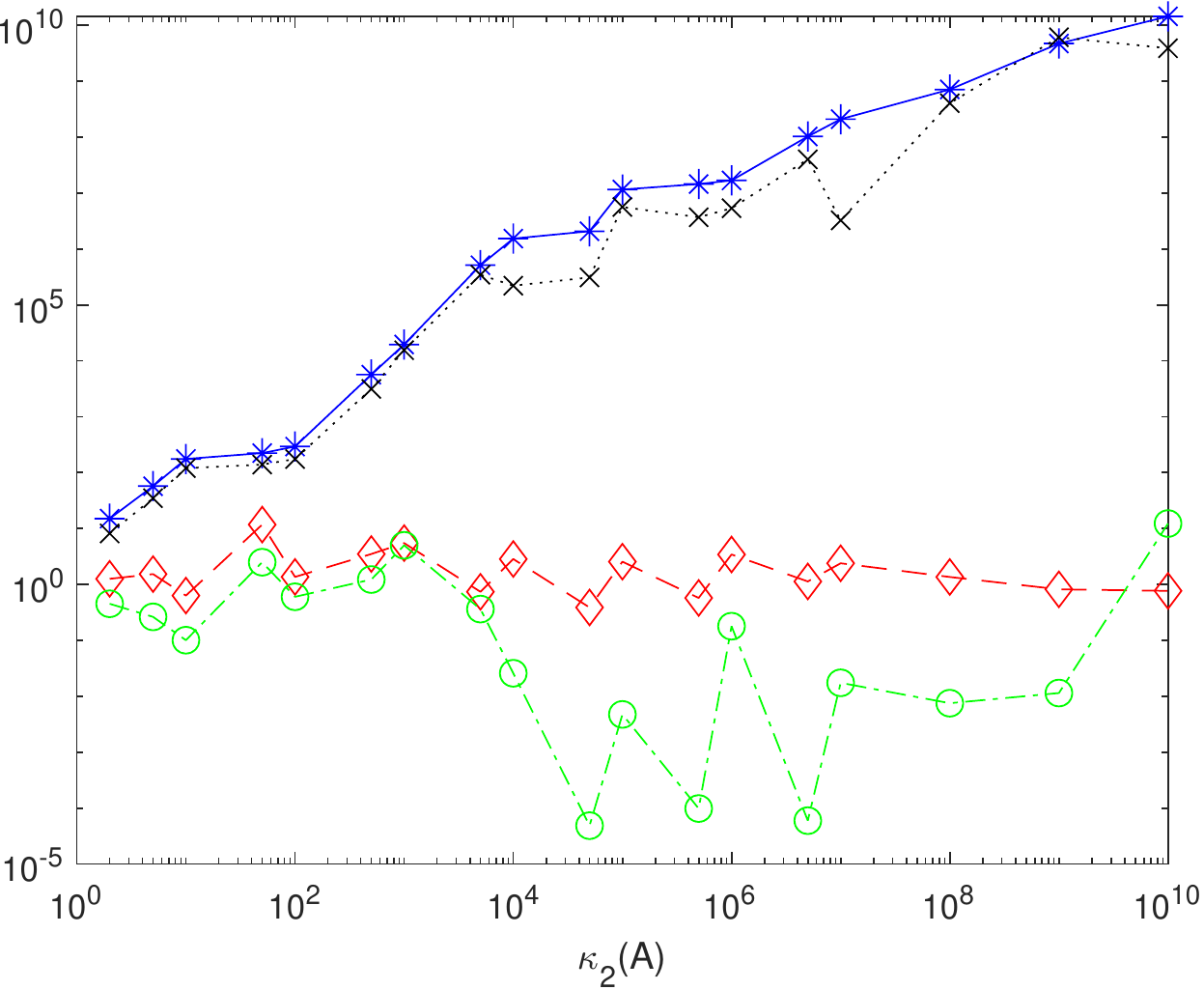}}
\caption{Structured and unstructured level\=/2 condition numbers for the matrix logarithm, comparing upper and lower bounds.}
\label{fig.main.logm}
\end{figure}

\begin{figure}
\begin{minipage}{.48\linewidth}
\centering
\subfloat[Orthogonal matrices]{\includegraphics[scale=.53]{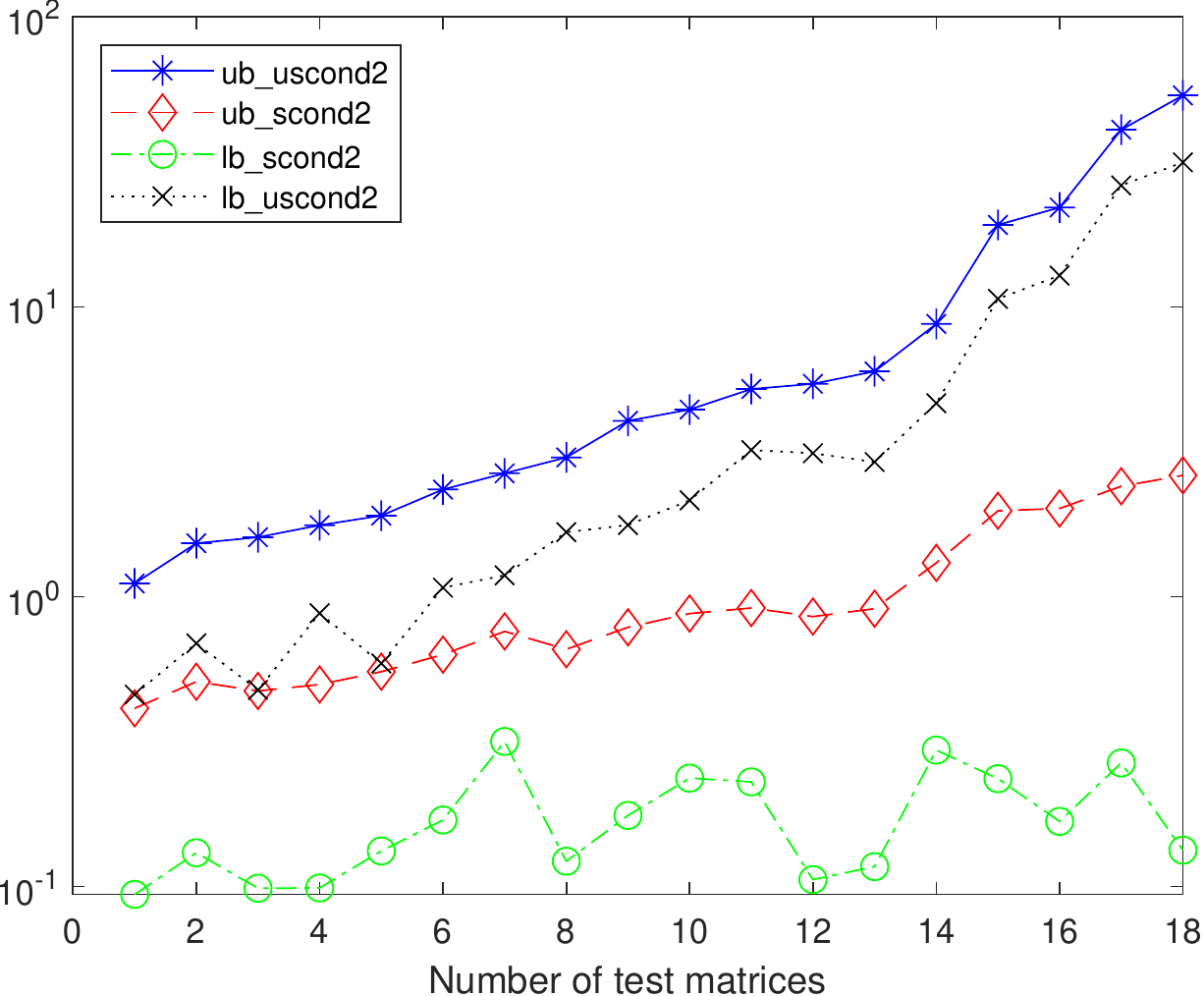}}
\end{minipage}
\begin{minipage}{.48\linewidth}
\centering
\subfloat[Symplectic matrices]{\includegraphics[scale=.53]{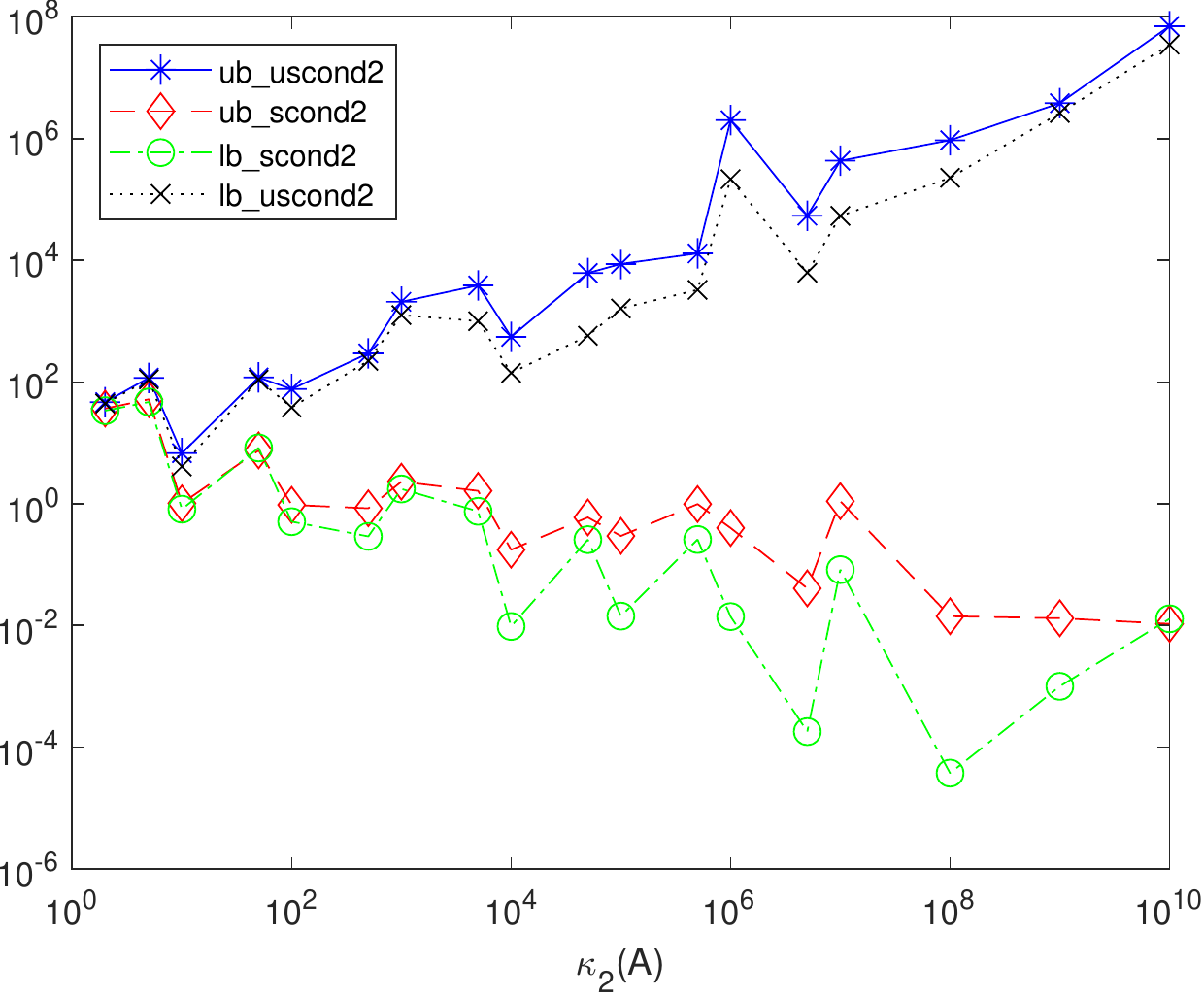}}
\end{minipage}\par\medskip
\centering
\subfloat[Perplectic matrices]{\label{fig.logm3}\includegraphics[scale=.53]{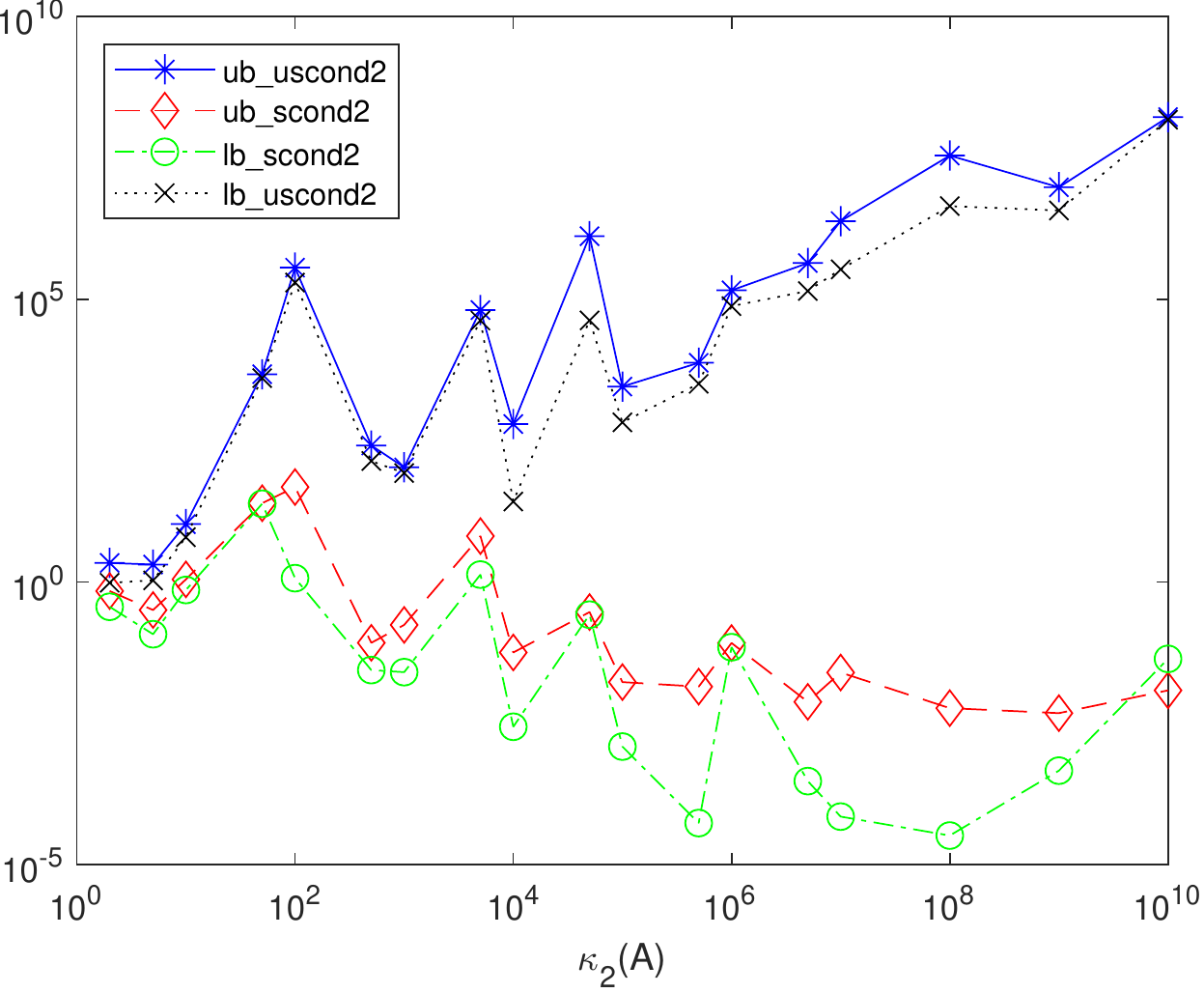}}
\caption{Structured and unstructured level\=/2 condition numbers for the matrix square root, comparing upper and lower bounds.}
\label{fig.main.sqrtm}
\end{figure}

\begin{figure}
\begin{minipage}{.48\linewidth}
\centering
\subfloat[Skew-symmetric matrices]{\includegraphics[scale=.53]{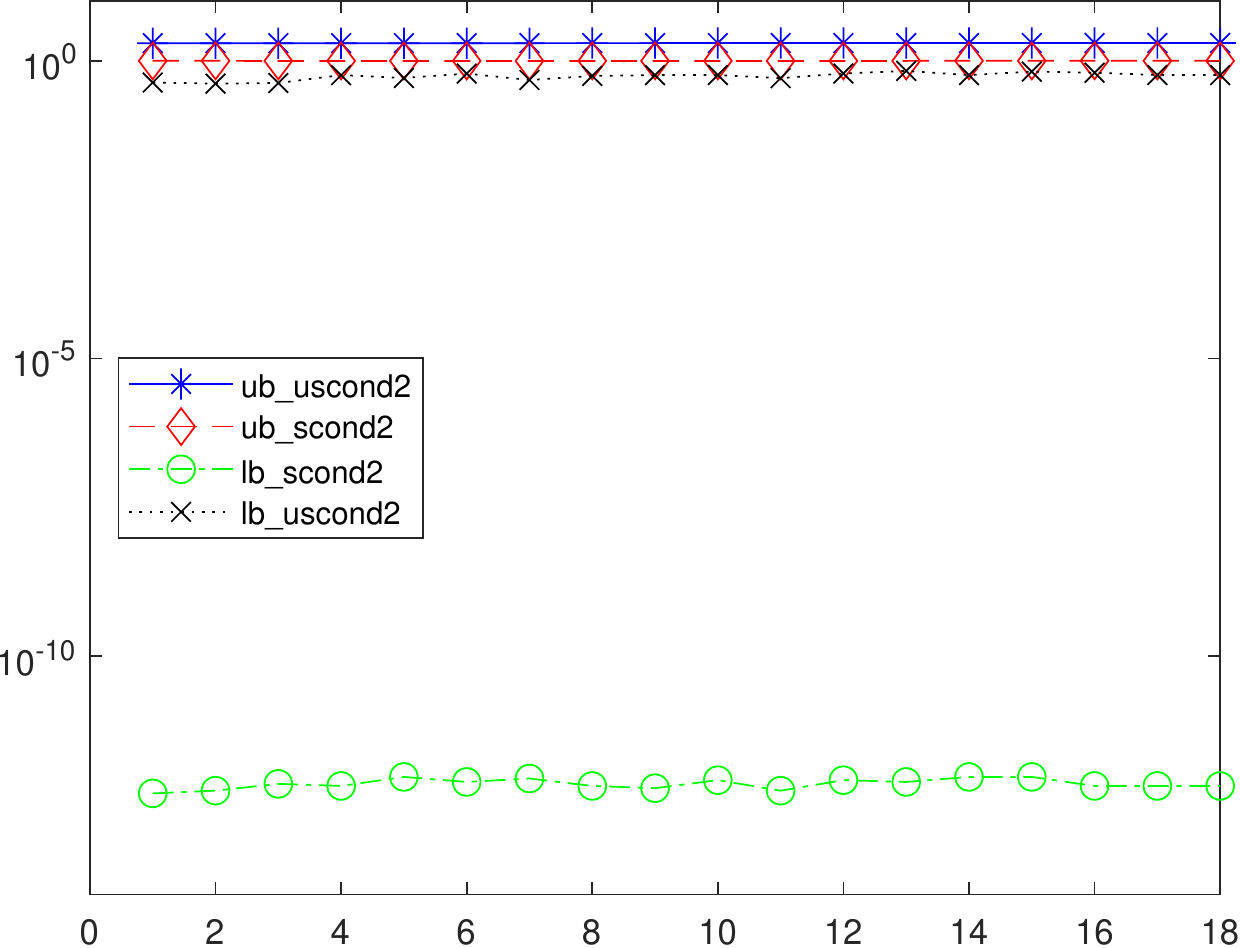}}
\end{minipage}
\begin{minipage}{.48\linewidth}
\centering
\subfloat[Hamiltonian matrices]{\includegraphics[scale=.53]{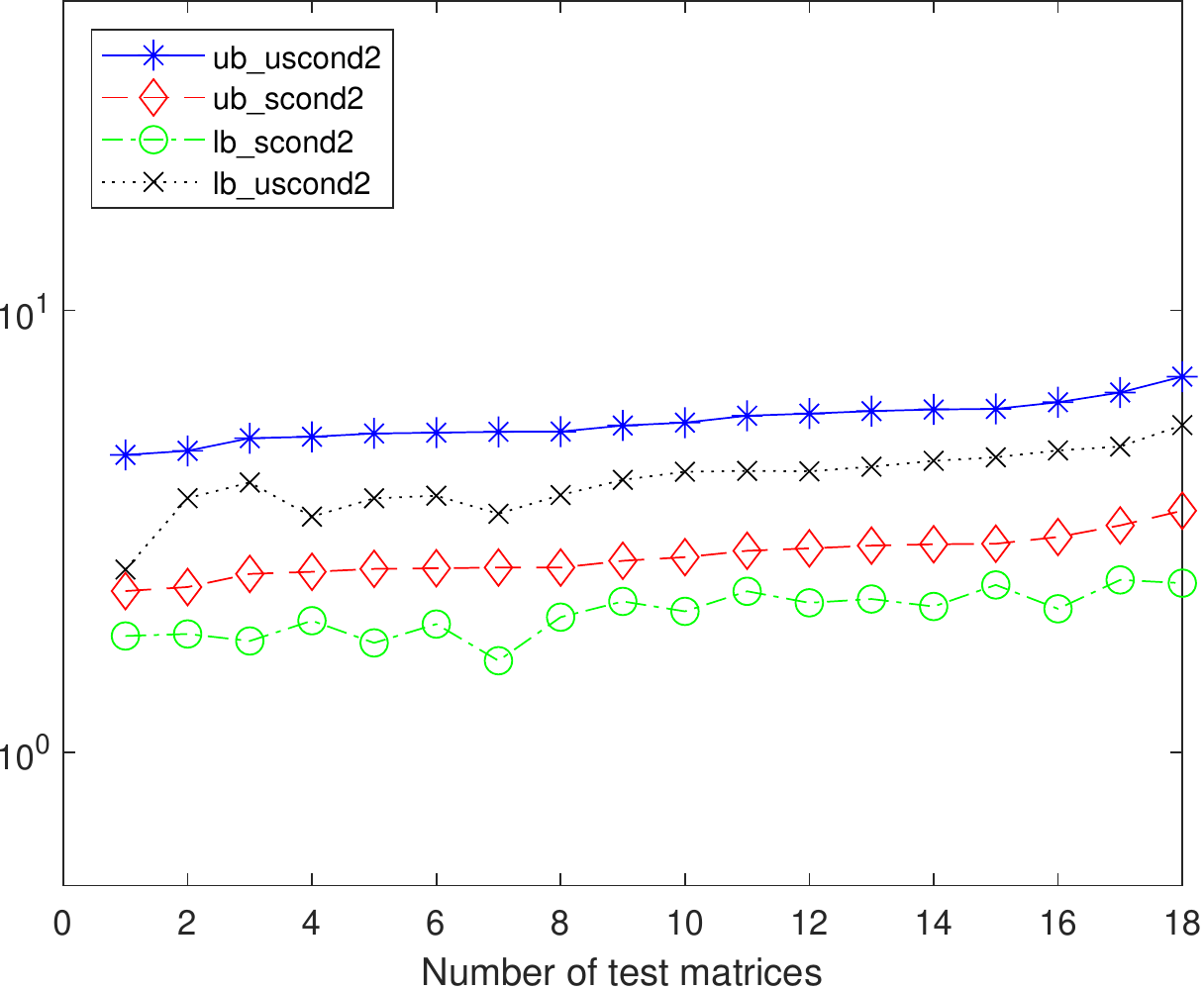}}
\end{minipage}
\caption{Structured and unstructured level\=/2 condition numbers for the matrix exponential, comparing upper and lower bounds.}
\label{fig.main.expm}
\end{figure}

\begin{figure}
\begin{minipage}{.48\linewidth}
\centering
\subfloat[Randomly generated matrices]{\includegraphics[scale=.53]{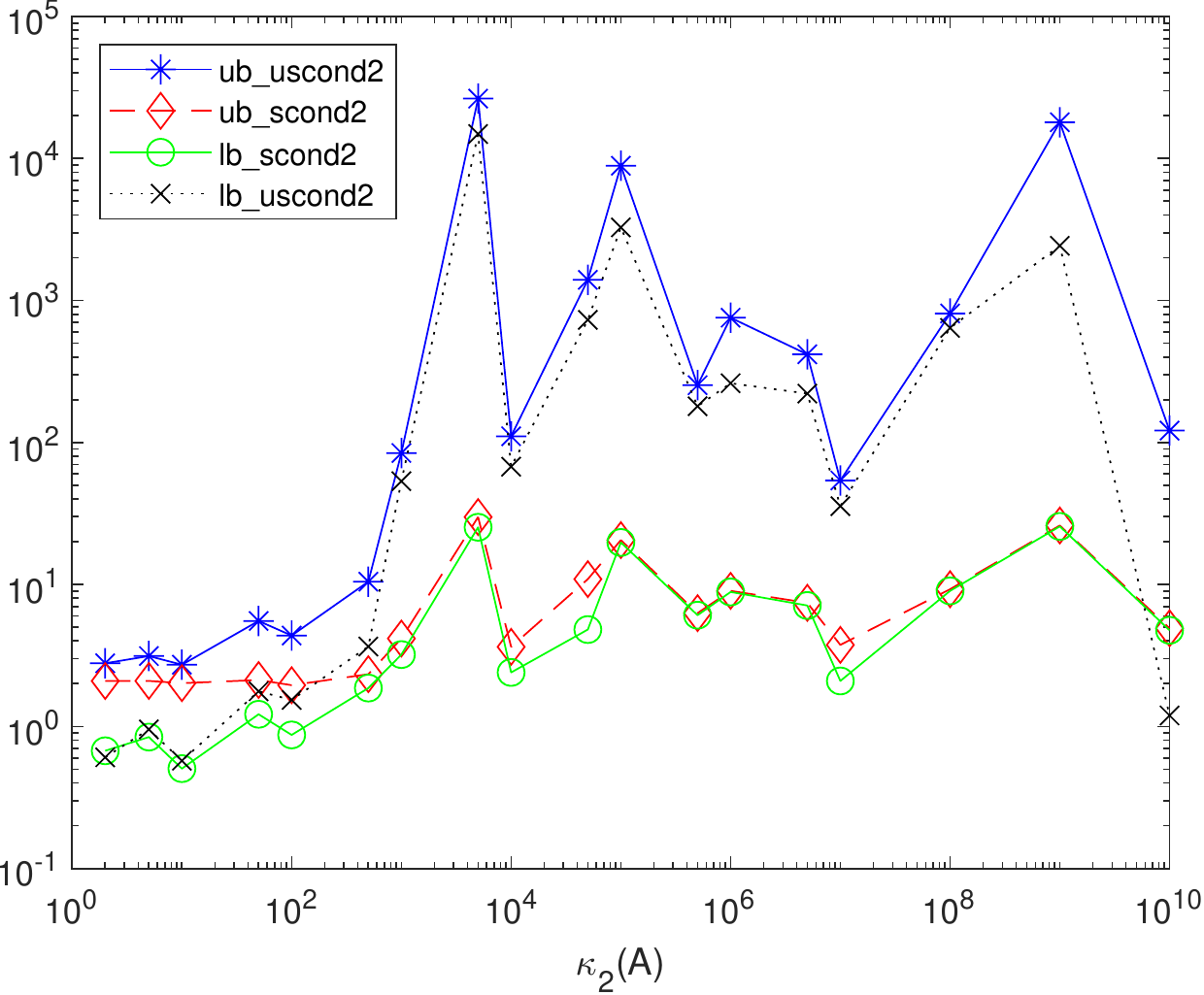}}
\end{minipage}
\begin{minipage}{.48\linewidth}
\centering
\subfloat[Matrices from benchmark sets]{\includegraphics[scale=.53]{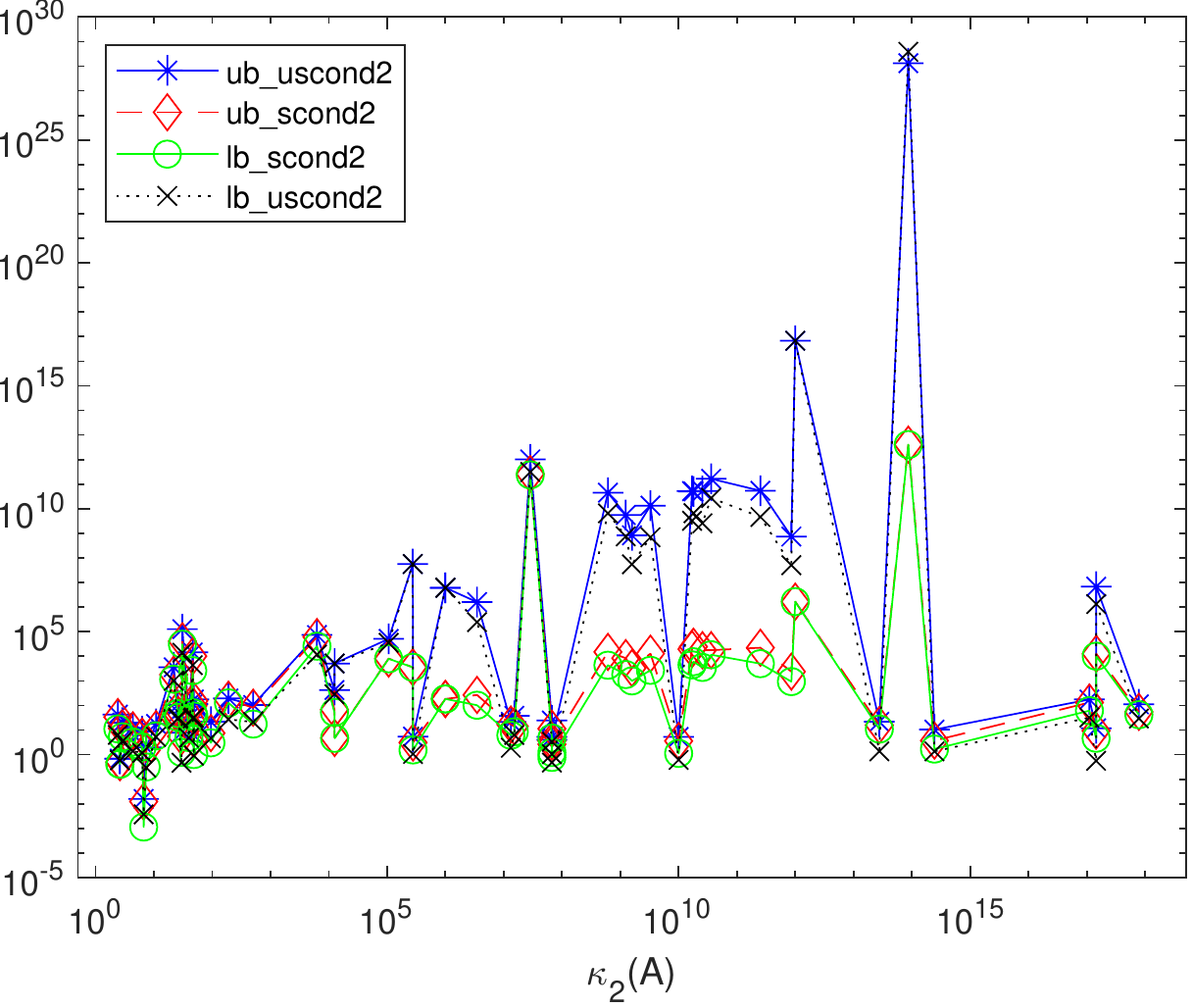}}
\end{minipage}
\caption{Structured and unstructured level\=/2 condition numbers for the exponential of upper quasi-triangular matrices, comparing upper and lower bounds.}\label{fig.triangular}
\end{figure}

The aim of this section is to compare the structured and unstructured level\=/2 condition numbers for the matrix logarithm, matrix square root and matrix exponential. As these cannot be computed exactly, we compare the upper bounds from sections~\ref{subsec.unstruc_lvl2},~\ref{subsec:struc_lvl2} and~\ref{sec.quasitriangular}, as well as the approximate lower bounds from section~\ref{subsec.lower_bounds}. 

We use MATLAB R2022a on a machine with an AMD Ryzen 7 3700X 8-core CPU to run the experiments.  We build test matrices as follows.
\begin{itemize}
\item
Orthogonal, symplectic and perplectic matrices are built using Jagger's toolbox \cite{jagg03}.
\item
Hamiltonian matrices are built as
$$
A=\begin{bmatrix}
X & G  \\
F & -X^T
\end{bmatrix},\quad
\text{where} \quad F^T=F\quad \text{and} \quad G^T=G.
$$
\item Quasi-triangular matrices are artificially generated as follows: We generate a diagonal matrix $D$ with equidistantly distributed entries in the interval $[-c, -1]$ for a specified parameter $c$. We then compute $A = XDX^{-1}$ with $X$ a randomly sampled matrix and perform a real Schur decomposition $A = QUQ^T$ to obtain the upper quasi-triangular matrix $U$.
\item Alternatively, we obtain quasi-triangular matrices by performing a (real) Schur decomposition of a large set of common benchmark matrices from the matrix function literature; see Appendix~\ref{sec.appendix} for details on the test set.
\end{itemize}
Depending on the matrix function we construct $B_{\M}$ using either equation $(\ref{eq.basejordanlie})$ or $(\ref{eq.baseauto})$. 

The upper bounds for the structured and unstructured level\=/2 condition numbers 
are computed using Algorithm \ref{algor1} and equation~(\ref{eq.upper_bound_unstructured}), respectively.
For symplectic, perplectic and quasi-triangular matrices the bounds are plotted against the 2-norm condition number for matrix inversion, $\kappa_2(A)=\|A\|_2\|A^{-1}\|_2$, whereas for orthogonal matrices we sort the test matrices by increasing values of the unstructured condition number.
Within the legend of each plot the acronyms \texttt{ub} and \texttt{lb} are used to indicate upper and lower bounds,
meanwhile \texttt{scond2} and \texttt{uscond2} denote the structured and unstructured level\=/2 condition numbers.

\begin{exprmt}
\item
We first consider the matrix logarithm $\log A:\G\rightarrow \L$ and by taking $\S_{\M}=\G$ we construct $B_{\M}$ using equation (\ref{eq.baseauto}). In Algorithm \ref{algor1} we compute the matrix logarithm by the MATLAB function \texttt{logm} which uses the improved inverse scaling and squaring method \cite{alhi12g}.
The upper bounds for the structured and unstructured level\=/2 condition numbers are shown in Figure \ref{fig.main.logm} for orthogonal, symplectic and perplectic matrices $A\in\R^{4\times 4}$. Comparing the results obtained from different test matrices, we observe that in particular in the symplectic and perplectic case, the structured condition number is much smaller than the unstructured one (in fact, the structured level\=/2 condition number appears to stay almost constant across all considered test matrices). In the orthogonal case, the difference is less pronounced, but can still be observed clearly. Both the magnitude as well as the slope at which it increases are substantially smaller for the structured level\=/2 condition number. 

The lower bounds that we also report confirm that in most cases, our upper bounds are quite tight and the structured condition number is indeed guaranteed to be much smaller than the unstructured one. Let us brief\/ly comment on the fact that for the most ill-conditioned symplectic and perplectic matrices, the lower bound for the unstructured condition number that we report lies \emph{above} the upper bound. This is likely caused by the fact that it is actually only an approximate lower bound (cf.~the discussion in section~\ref{subsec.lower_bounds}) combined with the severe ill-conditioning of the matrix under consideration. Apart from this, our results (also those reported in later experiments) still indicate that in general the lower bounds can be expected to be quite reliable.

\item
We take the matrix square root $A^{1/2}:\G\rightarrow \G$ and $B_{\M}$ is built by equation (\ref{eq.baseauto}). For computing the matrix square root we use \texttt{sqrtm} which is based on the method given in \cite{sqrtm_ref}. The bounds are shown in Figure \ref{fig.main.sqrtm} for orthogonal, symplectic and perplectic matrices $A\in\R^{4\times 4}$. We observe similar results as in the previous experiment. In particular, there is a dramatic increase in the unstructured level\=/2 condition numbers for ill-conditioned matrices, while the structured one again stays roughly constant for symplectic and perplectic matrices and only increases moderately for orthogonal matrices.

The lower bounds again confirm that our results are reliable and show that the upper bounds are quite tight, in particular in the symplectic and perplectic case.
\item
In this experiment we consider the matrix exponential $e^A:\L\rightarrow \G$ for skew-symmetric and Hamiltonian matrices $A\in\R^{4\times 4}$. We again construct $B_{\M}$ by equation (\ref{eq.basejordanlie}) and use the MATLAB function \texttt{expm}, which is based on a scaling and squaring algorithm \cite{alhi09a} to compute the exponential. Figure \ref{fig.main.expm} demonstrates that the upper bounds for the structured and unstructured level\=/2 condition number behave very similarly in both cases, with the bounds for the unstructured condition number being almost exactly two times as large as the structured one for both kinds of test matrices. In light of Proposition~\ref{prop:exp_skew} (although this result uses the spectral norm and not the Frobenius norm as in this experiment), it is interesting to observe that in the skew-symmetric case, the lower bound stays in the order of $10^{-13}$. This value close to machine precision appears to confirm that the exact value is zero.
\item 
We take randomly generated (as described at the beginning of this section) upper quasi-triangular matrices $U\in \R^{10 \times 10}$, with the parameter $c$ ranging between $2$ and $10^{10}$ and $f(U) = \exp(U)$. Note that we increase the matrix size compared to the previous experiments to allow for more variety in the diagonal block structure of $U$. We compute the upper bounds~\eqref{eq.upper_bound_unstructured} and~\eqref{eq.upper_bound_structured} as well as lower bounds for the structured and unstructured level\=/2 condition number. The results are depicted on the left-hand side of Figure~\ref{fig.triangular}. As it was the case for the other considered matrix structures, the bounds again confirm that structured level\=/2 conditioning can be much better than unstructured conditioning, in particular for the more ill-conditioned examples. The lower bounds obtained by optimisation are also mostly very tight.
\item 
In the final experiment, instead of randomly generating matrices, we compute bounds for the structured and unstructured level\=/2 condition numbers of $\exp(U)$, where $U$ is taken as the upper quasi-triangular factor of the (real) Schur decomposition of a wide range of matrices from the matrix function literature; see Appendix~\ref{sec.appendix} for details on the test set, which contains 56 matrices in total. All considered matrices are non-normal, so that the matrix $U$ from their Schur decomposition is not diagonal. For matrices that are scalable in size (which is, e.g., the case for most matrices coming from built-in MATLAB functions), we again choose them to be from $\R^{10 \times 10}$ (or the closest possible value if there are certain restrictions on the matrix size $n$). The resulting condition number bounds are shown on the right-hand side of Figure~\ref{fig.triangular}. In particular for those test matrices which are rather well-conditioned, the upper bounds for structured and unstructured condition numbers are very similar and differ by less than one order of magnitude. However, for a large portion of the more ill-conditioned examples, the upper bound of the structured condition number is often several orders of magnitude lower than that for the unstructured one. In almost all cases, the lower bounds confirm that the upper bounds are quite tight for those matrices, indicating that conclusions drawn from those bounds are indeed reasonable. Note that for a few of the more well conditioned matrices, the lower bound for the unstructured condition number actually lies below the upper bound for the structured condition number, so that no guaranteed conclusions can be drawn. As the results suggest that both condition numbers are close to each other anyway, this is not an essential problem, though. \label{ex.quasitriangular}
\end{exprmt}

\section{Conclusions}
\label{sec.conclusions}
This work compares upper and the lower bounds of structured level\=/2 condition numbers of matrix functions with unstructured ones. Our results show that the difference between the bounds of the structured and unstructured level\=/2 condition numbers depends on the choice of the matrices and matrix functions: While for orthogonal matrices, the upper bound of the structured level\=/2 condition number is not significantly smaller than the upper bound of the unstructured one, the importance of structure preserving algorithms emerges for the matrix logarithm and for the matrix square root of matrices in automorphism groups. Further analysis can be done for other matrix functions and matrix factorizations. Additionally, finding more efficient algorithms for computing the level\=/2 condition number would be very important for making the concept practically usable in actual computations.

\paragraph{Acknowledgements} We are grateful to Awad Al-Mohy for providing us his test suite of quasi-triangular matrices from~\cite{almohy22} for use in our Experiment~\ref{sec.numexp}.\ref{ex.quasitriangular}.

\appendix
\section{Details on test matrices used in Experiment~\ref{sec.numexp}.\ref{ex.quasitriangular}}\label{sec.appendix}
In this section, we provide a comprehensive list of the matrices that have been used in Experiment~\ref{sec.numexp}.\ref{ex.quasitriangular}, i.e., for comparing structured and unstructured condition numbers of quasi-triangular matrices. Most of the test matrices are available via built-in MATLAB routines, but some also come from other toolboxes and benchmark collections. In general, for a test matrix $A$ from some benchmark collection, we first compute the Schur decomposition $A = QUQ^*$ (or the real Schur decomposition $A = QUQ^T$ if $A$ is real),  and then use the upper triangular factor $U$ as input for the condition number estimation algorithms. 

\begin{table}[H]
    \centering
    \subfloat[]{%
    \begin{tabular}[t]{|l|l|l|l|l|}
    \hline
\texttt{binomial}$^\ast$	&  \texttt{forsythe}     &  \texttt{krylov}$^\ast$    & \texttt{rando}    \\    
\texttt{chebspec} 			&  \texttt{frank}        &  \texttt{leslie}           & \texttt{randsvd}  \\    
\texttt{chebvand}           &  \texttt{gearmat}      &  \texttt{lesp}             & \texttt{redheff}  \\    
\texttt{chow}            	&  \texttt{grcar}        &  \texttt{lotkin}           & \texttt{riemann}  \\    
\texttt{clement}   			&  \texttt{invhess}      &  \texttt{parter}           & \texttt{smoke}    \\    
\texttt{cycol}      		&  \texttt{invol}$^\ast$ &  \texttt{randcolu}         &                   \\
\texttt{dramadah}  			&  \texttt{kahan} 		 &  \texttt{randjorth}$^\ast$ &                   \\
    \hline
    \end{tabular}}
    \hspace{.5cm}
    \subfloat[]{%
    \begin{tabular}[t]{|l|}
    \hline
        \texttt{rand}               \\
        \texttt{randn}              \\
        \texttt{pascal}$^\ast$      \\
        \\
        \\
        \\
        \\
        \hline
    \end{tabular}}
    \hspace{.5cm}
    \subfloat[]{%
    \begin{tabular}[t]{|l|}
    \hline
        \texttt{gfpp}		\\     
        \texttt{makejcf}	\\  
        \texttt{rschur}   	\\
        \texttt{vand}  		\\
        \\
        \\
        \\
        \hline
    \end{tabular}}
    \caption{Test matrices used in Experiment~\ref{sec.numexp}.\ref{ex.quasitriangular}: (a) Matrices from MATLAB \texttt{gallery}, (b) other built-in MATLAB matrices, (c) Matrices from the Matrix Computation Toolbox~\cite{Higham:MCT}. ${}^\ast$: The entries of these matrices were scaled by $\frac{10}{\|A\|_1}$ to prevent overflow in the matrix exponential.}
    \label{tab.test_set}
\end{table}

Table~\ref{tab.test_set} summarises a large part of the matrices we used. Additionally, we used matrices which are not available via built-in MATLAB or toolbox functions:

The following two matrices are taken from~\cite[Test 3--4]{ward1977numerical},
\[
A_1 = \begin{bmatrix}
-131 & 19 & 18 \\
-390 & 56 & 54 \\
-387 & 57 & 52
\end{bmatrix},
\quad A_2 = {\small\begin{bmatrix}
       0 & 1 & 0 & 0 & 0 & 0 & 0 & 0 & 0 & 0 \\
       0 & 0 & 1 & 0 & 0 & 0 & 0 & 0 & 0 & 0 \\
       0 & 0 & 0 & 1 & 0 & 0 & 0 & 0 & 0 & 0 \\
       0 & 0 & 0 & 0 & 1 & 0 & 0 & 0 & 0 & 0 \\
       0 & 0 & 0 & 0 & 0 & 1 & 0 & 0 & 0 & 0 \\
       0 & 0 & 0 & 0 & 0 & 0 & 1 & 0 & 0 & 0 \\
       0 & 0 & 0 & 0 & 0 & 0 & 0 & 1 & 0 & 0 \\
       0 & 0 & 0 & 0 & 0 & 0 & 0 & 0 & 1 & 0 \\
       0 & 0 & 0 & 0 & 0 & 0 & 0 & 0 & 0 & 1 \\
10^{-10} & 0 & 0 & 0 & 0 & 0 & 0 & 0 & 0 & 0 \\
\end{bmatrix}}.
\]

The next two matrices are from~\cite[Example~3.10, Example~4.4]{dieci2000pade},
\[
A_3 = \frac{1}{8}{\small\begin{bmatrix}
1.3 & 1.3 & 1.3 & 1.3 & 10^6 & 10^6 & 10^6 & 10^6 \\
1.3 & 1.3 & 1.3 & 1.3 & 10^6 & 10^6 & 10^6 & 10^6 \\
1.3 & 1.3 & 1.3 & 1.3 & 10^6 & 10^6 & 10^6 & 10^6 \\
1.3 & 1.3 & 1.3 & 1.3 & 10^6 & 10^6 & 10^6 & 10^6 \\
0 & 0 & 0 & 0 & -1.3 & -1.3 & -1.3 & -1.3 \\
0 & 0 & 0 & 0 & -1.3 & -1.3 & -1.3 & -1.3 \\
0 & 0 & 0 & 0 & -1.3 & -1.3 & -1.3 & -1.3 \\
0 & 0 & 0 & 0 & -1.3 & -1.3 & -1.3 & -1.3
\end{bmatrix}}, \quad 
A_4 = \begin{bmatrix}
\exp(0.1) & 10^6\cdot\exp(0.1) \\
0         & 10^{-8} + \exp(0.1)
\end{bmatrix}.
\]

The following matrix is from~\cite[Section~4]{kenney1989condition},
\[
A_5 = \begin{bmatrix}
48  &  -49 &  50 &   49 \\
0   &  -2  & 100 &    0 \\
0   &  -1  &  -2 &    1 \\
-50 &  50  &  50 &  -52 
\end{bmatrix}.
\]

The next two matrices are adapted from~\cite[Example~II,~III]{parlett1985development},
\[
A_6 = {\small\begin{bmatrix}
- 3.5i	& -12    &   1	    & 1  	 & 	 1 	  &   1  &  1    &   1   \\
0		& - 2.5i &  -8 	    & 1  	 &   1 	  &   1  &  1    &   1   \\
0		&   0 	 &   - 1.5i & -4	 &   1 	  &   1  &  1    &   1   \\
0		&   0 	 &   0  	&- 0.5i  &   0 	  &   1  &  1    &   1   \\
0		&   0 	 &   0  	&  0     &   0.5i &   4  &  1    &   1   \\
0		&   0 	 &   0  	&  0     &   0    & 1.5i &  8    &   1   \\
0		&   0 	 &   0  	&  0     &   0    &   0  &  2.5i &  12   \\
0		&   0 	 &   0  	&  0     &   0    &   0  &  0 	 &  3.5i 
\end{bmatrix}},
\]
\[
A_7 = {\small\begin{bmatrix}
 -3.5 &  1   &   0   &  0     &  0   & 0    & 0   & 0  \\
    0 & -2.5 &   1   &   0    &  0   &  0   & 0   & 0  \\
    0 &  0   &  -1.5 &   1    &  0   &  0   & 0   & 0  \\
    0 &  0   &   0   &  -0.5  &  1   &  0   & 0   & 0  \\
    0 &  0   &   0   &   0    &  0.5 &  1   & 0   & 0  \\
    0 &  0   &   0   &   0    &  0   &  1.5 & 1   & 0  \\
    0 &  0   &   0   &   0    &  0   &  0   & 2.5 & 1  \\
    0 &  0   &   0   &   0    &  0   &  0   & 0   & 3  
\end{bmatrix}}.
\]

This matrix comes from~\cite[Test~2]{cardoso2001theoretical},
\[
A_8 = \begin{bmatrix}
-149 &  -50 & -154 \\
 537 &  180 &  546 \\
 -27 &   -9 &  -25
\end{bmatrix}.
\]

The next matrix is taken from~\cite[Example~6.3]{dieci1996computational},
\[
A_9 = \begin{bmatrix}
1 + 10^{-7} & 10^5      & 10^4 \\
0           & 1+10^{-1} & 10^5 \\
0           & 0         & 11   
\end{bmatrix}.
\]

Additionally, the following four matrices were provided to us by Awad Al-Mohy from his algorithm test set,
\[
A_{10} = \begin{bmatrix}
\exp(i(\pi-10^{-7})) & 1                   \\
0                    & \exp(i(\pi+10^{-7})) 
\end{bmatrix}, \quad
A_{11} = \begin{bmatrix}
\exp(i(\pi-10^{-7})) & 1000                \\
0                    & \exp(i(\pi+10^{-7})) 
\end{bmatrix},
\]

\[
A_{12} = \begin{bmatrix}
\exp(i(\pi-10^{-7})) & 1                                   \\
0                   & (1+10^{-7}i)\cdot\exp(i(\pi-10^{-7}))
\end{bmatrix},
\]
\[
A_{13} = \begin{bmatrix}
\exp(i(\pi-10^{-7})) & 1000                                \\
0                   & (1+10^{-7}i)\cdot\exp(i(\pi-10^{-7}))
\end{bmatrix},
\]

\bibliography{ref_level2}

\begin{thebibliography}{10}

\bibitem{almohy22}
Awad~H. Al-Mohy.
\newblock Conditioning of matrix functions of quasi-triangular matrices.
\newblock Technical report, 2022.

\bibitem{al2021complex}
Awad~H. Al-Mohy and Bahar Arslan.
\newblock The complex step approximation to the higher order {F}r{\'e}chet
  derivatives of a matrix function.
\newblock {\em Numer.\ Algorithms}, 87(3):1061--1074, 2021.

\bibitem{alhi09a}
Awad~H. Al-Mohy and Nicholas~J. Higham.
\newblock A new scaling and squaring algorithm for the matrix exponential.
\newblock {\em SIAM J. Matrix Anal. Appl.}, 31(3):970--989, 2009.

\bibitem{app7}
Awad~H. Al-Mohy and Nicholas~J. Higham.
\newblock Computing the action of the matrix exponential, with an application
  to exponential integrators.
\newblock {\em SIAM J. Sci. Comput.}, 33(2):488--511, 2011.

\bibitem{alhi12g}
Awad~H. Al-Mohy and Nicholas~J. Higham.
\newblock Improved inverse scaling and squaring algorithms for the matrix
  logarithm.
\newblock {\em SIAM J. Sci. Comput.}, 34(4):C153--C169, 2012.

\bibitem{app4}
Vincent Arsigny, Olivier Commowick, and Nicholas Ayache.
\newblock A fast and log-{E}uclidean polyaffine framework for locally linear
  registration.
\newblock {\em J. Math Imaging Vis.}, 33:222–238, 2009.

\bibitem{bvt2019}
Bahar Arslan, Vanni Noferini, and Fran\c{c}oise Tisseur.
\newblock The structured condition number of a differentiable map between
  matrix manifolds, with applications.
\newblock {\em SIAM J. Matrix Anal. Appl.}, 40(2):774--799, 2019.

\bibitem{app5}
Michele Benzi, Ernesto Estrada, and Christine Klymko.
\newblock Ranking hubs and authorities using matrix functions.
\newblock {\em Linear Algebra Appl.}, 438(5):2447--2474, 2013.

\bibitem{cardoso2001theoretical}
Jo\~{a}o~R. Cardoso and F.~Silva Leite.
\newblock Theoretical and numerical considerations about {P}ad{\'e}
  approximants for the matrix logarithm.
\newblock {\em Linear Algebra Appl.}, 330(1-3):31--42, 2001.

\bibitem{Davies:2004:SCM}
Philip~I. Davies.
\newblock Structured conditioning of matrix functions.
\newblock {\em Electron. J. Linear Algebra}, 11:132--161, 2004.

\bibitem{dahi03}
Philip~I. Davies and Nicholas~J. Higham.
\newblock A {S}chur-{P}arlett algorithm for computing matrix functions.
\newblock {\em SIAM J. Matrix Anal. Appl.}, 25(2):464--485, 2003.

\bibitem{sqrtm_ref}
Edvin Deadman, Nicholas~J. Higham, and Rui Ralha.
\newblock A recursive blocked schur algorithm for computing the matrix square
  root.
\newblock pages 171--182, 2012.

\bibitem{Demmel_level2}
James~W. Demmel.
\newblock On condition numbers and the distance to the nearest ill-posed
  problem.
\newblock {\em Numer. Math.}, 51(3):251--289, 1987.

\bibitem{dieci1996computational}
Luca Dieci, Benedetta Morini, and Alessandra Papini.
\newblock Computational techniques for real logarithms of matrices.
\newblock {\em SIAM J.\ Matrix Anal.\ Appl.}, 17(3):570--593, 1996.

\bibitem{dieci2000pade}
Luca Dieci and Alessandra Papini.
\newblock Pad{\'e} approximation for the exponential of a block triangular
  matrix.
\newblock {\em Linear Algebra Appl.}, 308(1-3):183--202, 2000.

\bibitem{app6}
Peter Grindrod and Desmond~J. Higham.
\newblock A dynamical systems view of network centrality.
\newblock {\em Proc. Roy. Soc. London Ser.}, 470(2165), 2014.

\bibitem{Des_Higham1995193}
Desmond~J. Higham.
\newblock Condition numbers and their condition numbers.
\newblock {\em Linear Algebra Appl.}, 214(0):193 -- 213, 1995.

\bibitem{Higham:MCT}
Nicholas~J. Higham.
\newblock The {Matrix Computation Toolbox}.
\newblock \verb|http://www.ma.man.ac.uk/~higham/mctoolbox|.

\bibitem{high:FM}
Nicholas~J. Higham.
\newblock {\em Functions of Matrices: {Theory} and Computation}.
\newblock Society for Industrial and Applied Mathematics, Philadelphia, PA,
  USA, 2008.

\bibitem{app2}
Nicholas~J. Higham and Lijing Lin.
\newblock On $p$th roots of stochastic matrices.
\newblock {\em Linear Algebra Appl.}, 435(3):448--463, 2011.

\bibitem{hiliu21}
Nicholas~J. Higham and Xiaobo Liu.
\newblock A multiprecision derivative-free {S}chur--{P}arlett algorithm for
  computing matrix functions.
\newblock {\em SIAM J. Matrix Anal. Appl.}, 42(3):1401--1422, 2021.

\bibitem{hire14a}
Nicholas~J. Higham and Samuel~D. Relton.
\newblock Higher order {Fr\'{e}chet} derivatives of matrix functions and the
  level-{$2$} condition number.
\newblock {\em SIAM J. Matrix Anal. Appl.}, 35(3):1019--1037, 2014.

\bibitem{app3}
Robert~B. Israel, Jeffrey~S. Rosenthal, and Jason~Z. Wei.
\newblock Finding generators for {M}arkov chains via empirical transition
  matrices, with applications to credit ratings.
\newblock {\em Math. Finance}, 11(2):245--265, 2001.

\bibitem{jagg03}
David~P. Jagger.
\newblock {MATLAB} toolbox for classical matrix groups.
\newblock {M.Sc.} {Thesis}, University of Manchester, Manchester, England,
  2003.

\bibitem{kenney1989condition}
Charles Kenney and Alan~J. Laub.
\newblock Condition estimates for matrix functions.
\newblock {\em SIAM J.\ Matrix Anal.\ Appl.}, 10(2):191--209, 1989.

\bibitem{lare98}
Jeffrey~C. Lagarias, James~A. Reeds, Margaret~H. Wright, and Paul~E. Wright.
\newblock Convergence properties of the {N}elder--{M}ead simplex method in low
  dimensions.
\newblock {\em SIAM J. Optim.}, 9(1):112--147, 1998.

\bibitem{michel2022timeevolver}
Marco Michel and Sebastian Zell.
\newblock Time{E}volver: {A} program for time evolution with improved error
  bound.
\newblock {\em Comput.\ Phys.\ Commun.}, 277:108374, 2022.

\bibitem{app1}
Simon~T. Parker, David~M. Lorenzetti, and Michael~D. Sohn.
\newblock Implementing state-space methods for multizone contaminant transport.
\newblock {\em Buil. Environ.}, 71:131--139, 2014.

\bibitem{parlett1985development}
Beresford~N. Parlett and Kwok~C. Ng.
\newblock Development of an accurate algorithm for {$\exp(Bt)$}.
\newblock Technical report, California Univ.\ Berkeley Center for Pure and
  Applied Mathematics, 1985.

\bibitem{schweitzer2023}
Marcel Schweitzer.
\newblock Integral representations for higher-order {F}r{\'e}chet derivatives
  of matrix functions: {Q}uadrature algorithms and new results on the level-2
  condition number.
\newblock {\em Linear Algebra Appl.}, 656:247--276, 2023.

\bibitem{Tisseur_scond}
Fran\c{c}oise Tisseur and Stef Graillat.
\newblock Structured condition numbers and backward errors in scalar product
  spaces.
\newblock {\em Electron. J. Linear Algebra}, 15:159--177, 2006.

\bibitem{van1977sensitivity}
Charles Van~Loan.
\newblock The sensitivity of the matrix exponential.
\newblock {\em SIAM J.\ Numer.\ Anal.}, 14(6):971--981, 1977.

\bibitem{ward1977numerical}
Robert~C. Ward.
\newblock Numerical computation of the matrix exponential with accuracy
  estimate.
\newblock {\em SIAM J.\ Numer.\ Anal.}, 14(4):600--610, 1977.

\end{thebibliography}
\end{document}